\date{\today}
\theoremstyle{plain}
\newtheorem{theorem}{Theorem}[section]
\newtheorem{lemma}[theorem]{Lemma}
\newtheorem{corollary}[theorem]{Corollary}
\newtheorem{conjecture}[theorem]{Conjecture}
\newtheorem{question}[theorem]{Question}
\theoremstyle{definition}
\newtheorem{remark}[theorem]{Remark}
\newcommand{\N}{\mathbb N } 
\newcommand{\Z}{\mathbb Z }
\DeclareMathOperator{\ord}{ord}
\title{Power maps in finite groups}
\author{ Matt Larson
\thanks{Department of Mathematics, Yale University. Email:
matthew.larson@yale.edu.} }
\begin{document}
\maketitle

\renewcommand{\thefootnote}{}

\footnote{2010 \emph{Mathematics Subject Classification}: Primary 05C25; Secondary 11Y99.}

\footnote{\emph{Key words and phrases}: Repeated exponentiation, Cycle structure, Finite groups.}

\renewcommand{\thefootnote}{\arabic{footnote}}
\setcounter{footnote}{0}

\begin{abstract}
\noindent In recent work, Pomerance and Shparlinski have obtained results on the number of cycles in the functional graph of the map $x \mapsto x^a$ in $\mathbb{F}_p^*$. We prove similar results for other families of finite groups.  In particular, we obtain estimates for the number of cycles for cyclic groups, symmetric groups, dihedral groups and $SL_2(\mathbb{F}_q)$. We also show that the cyclic group of order $n$ minimizes the number of cycles among all nilpotent groups of order $n$ for a fixed exponent $a$. Finally, we pose several problems.
\end{abstract}

\section{Introduction}

\noindent Let $H$ be a finite group, and let $a \ge 2$ be an integer. The iterations of the map $x \mapsto x^a$ form a sort of dynamical system in a finite group. As such, it is natural to study the structure of the periodic points of this map. Define the undirected multigraph $G(a, H)$ with vertex set $H$ and $x \sim y$ if $x^a = y$, with an additional edge if $y^a = x$. Note that $G(a, H)$ may have loops (for example at the identity) or cycles of length $2$. The orbit structure of the map $x \mapsto x^a$ in $G$ is encoded in $G(a, H)$. This graph has been extensively studied in the case of $H = (\Z/n\Z)^*$ in connection with algorithmic number theory and cryptography (see, e.g., \cite{Chou2004}, \cite{Kurlberg2005} and \cite{Vasiga2004}). In particular, the properties of the well-known Blum-Blum-Shub psuedorandom number generator \cite{Blum1986} are determined by the properties of $G(2, (\Z/pq\Z)^*)$.
\begin{figure}[!htb]
\begin{center}
\begin{tikzpicture}
\draw (1,1) -- (4,1) -- (4,4) -- (1,4) -- (1,1);
\draw (7, 1) -- (7, 4);
\draw (-0.7, -0.7) -- (1,1);
\draw (4, 1) -- (5.7, -0.7);
\draw (1, 4) -- (-0.7, 5.7);
\draw (4, 4) -- (5.7, 5.7);
\node at (7.3,  4) {0}; 
\node at (7.3,  1) {5};
\node at (1.4,  1.4) {2}; 
\node at (1.4,  4.4) {4}; 
\node at (4.4,  1.4) {6};
\node at (3.6,  4.4) {8};  
\node at (-1, -0.7) {1}; 
\node at (6, -0.7) {3};
\node at (6, 5.7) {9};
\node at (-1, 5.7) {7};
\fill (7, 4) circle[radius= 0.1 cm];
\fill (7, 1) circle[radius= 0.1 cm];
\fill (1, 1) circle[radius= 0.1 cm];
\fill (4, 4) circle[radius= 0.1 cm];
\fill (4, 1) circle[radius= 0.1 cm];
\fill (1, 4) circle[radius= 0.1 cm];
\fill (5.7, -0.7) circle[radius= 0.1 cm];
\fill (-0.7, 5.7) circle[radius= 0.1 cm];
\fill (5.7, 5.7) circle[radius= 0.1 cm];
\fill (-0.7, -0.7) circle[radius= 0.1 cm];
\draw[scale=4] (1.75, 1)  to[in=50,out=130,loop] (1.75, 1);
\end{tikzpicture}
\caption{$G(2, \mathbb{Z}/10\mathbb{Z})$}
\end{center}
\end{figure}
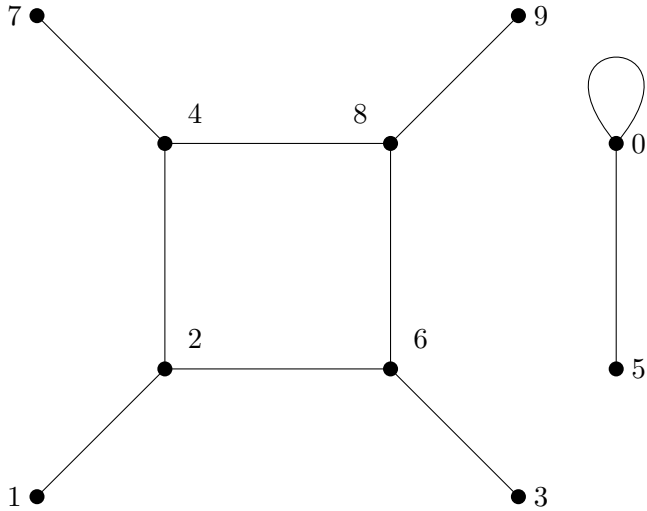

\noindent Note that $G(a, H)$ is a refinement of the power graph of $H$ (see \cite{Abawajy2013} and references therein). In particular, the power graph of $H$  is the graph with vertex set $H$ and $x \sim y$ if $x \in \langle y \rangle$ or $y \in \langle x \rangle$. One can build the power graph of $H$ out of  $G(a,H)$ by taking the union of the edges of $G(a, H)$ for $1 \le a \le \vert H \vert$ and deleting any loops or multiple edges.

\noindent Let $N(a, H)$ denote the number of connected components in $G(a, H)$. Since each connected component contains a unique cycle, $N(a, H)$ is also the number of cycles in $G(a, H)$.
In recent work, Pomerance and Shparlinski gave results on the average order, normal order, and extremal  order of $N(a, \mathbb{F}_p^*)$ for $p$ prime.

\begin{theorem}[{\cite[Theorems 1.1 and 1.2]{Pomerance2017}}]\label{pom}
For any $a \ge 2$:
\begin{itemize}
\item There exist infinitely many primes $p$ such that $N(a, \mathbb{F}_p^*) > p^{5/12 + o(1)}$. 
\item For almost all primes $p$, $N(a, \mathbb{F}_p^*) < p^{1/2 + o(1)}$.
 \item $\frac{1}{\pi(x)} \displaystyle\sum_{p \le x} N(a, \mathbb{F}_p^*) \gg x^{0.293}$ 
\end{itemize} Under the assumption of the Elliot-Halberstam conjecture and a strong Linnik's constant, we can improve this to $$\frac{1}{\pi(x)} \sum_{p \le x} N(a, \mathbb{F}_p^*) \ge x^{1+ o(1)}.$$

\end{theorem}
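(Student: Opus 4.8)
The plan is to reduce all three statements to analytic estimates for a single number‑theoretic sum. Fix $a \ge 2$ and a prime $p \nmid a$. Choosing a generator identifies $\mathbb{F}_p^* \cong \Z/(p-1)\Z$ and turns $x \mapsto x^a$ into $y \mapsto ay$. Writing $p-1 = st$ with $s$ the largest divisor of $p-1$ coprime to $a$, multiplication by $a$ is eventually collapsing on the $t$-part but permutes the subgroup $\mu_s$ of $s$-th roots of unity, so every cycle of $G(a,\mathbb{F}_p^*)$ lies in $\mu_s$. An element of exact order $d \mid s$ satisfies $x^{a^k}=x$ iff $a^k \equiv 1 \pmod d$, so the $\phi(d)$ elements of order $d$ split into cycles of length $\ord_d(a)$. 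This gives the exact formula
$$N(a,\mathbb{F}_p^*) = \sum_{\substack{d \mid p-1 \\ \gcd(d,a)=1}} \frac{\phi(d)}{\ord_d(a)},$$
and the whole theorem becomes a problem about this sum as $p$ varies.

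For the extremal lower bound, observe that every divisor $d$ of $Q := a^k-1$ is coprime to $a$ and has $\ord_d(a) \le k$, so whenever $p \equiv 1 \pmod Q$ the formula yields $N(a,\mathbb{F}_p^*) \ge \tfrac1k \sum_{d\mid Q}\phi(d) = Q/k = Q^{1+o(1)}$. Thus I would fix $k$, set $Q = a^k-1$, and invoke a Linnik-type bound for the least prime $p \equiv 1 \pmod Q$: if such a $p$ exists with $p \le Q^{c}$, then $N(a,\mathbb{F}_p^*) \ge p^{1/c - o(1)}$, and letting $k \to \infty$ produces infinitely many such $p$. The main obstacle here is precisely the quality of the least-prime bound: a crude appeal to Linnik's constant is far too weak, and one must exploit the special shape of $a^k-1$ (which factors through cyclotomic polynomials and carries many small prime factors), or sieve over a dyadic family of such moduli, to reach the exponent $5/12$. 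Under the Elliott–Halberstam conjecture together with a strong Linnik constant one can instead take $c = 1 + o(1)$, i.e. find $p = Q^{1+o(1)}$ in the progression for many $k$ simultaneously; making this construction apply to a positive proportion of primes $p \le x$ is exactly what later forces the average up to $x^{1+o(1)}$.

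For the average order I would interchange summation:
$$\sum_{p \le x} N(a,\mathbb{F}_p^*) = \sum_{\substack{d \le x \\ \gcd(d,a)=1}} \frac{\phi(d)}{\ord_d(a)}\, \pi(x;d,1).$$
Replacing $\pi(x;d,1)$ by $\mathrm{li}(x)/\phi(d)$ is legitimate on average for $d \le x^{1/2-\epsilon}$ by the Bombieri–Vinogradov theorem, the $\phi(d)$ factors cancel, and one is left with
$$\frac{1}{\pi(x)}\sum_{p \le x} N(a,\mathbb{F}_p^*) \gg \frac{1}{\log x} \sum_{\substack{d \le x^{1/2-\epsilon} \\ \gcd(d,a)=1}} \frac{1}{\ord_d(a)}.$$
Everything now rests on a lower bound for $\sum_{d \le y} 1/\ord_d(a)$, i.e. on showing that unusually many $d \le y$ have small multiplicative order with respect to $a$. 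This is the crux: one counts divisors $d$ of $a^t-1$ for small $t$ (equivalently $d \mid \mathrm{lcm}_{t \le T}(a^t-1)$), using the abnormally large divisor function of $a^t-1$; the resulting density exponent, combined with the Bombieri–Vinogradov level of distribution, is what produces the constant $0.293$.

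Finally, for the almost-all upper bound I would use $N(a,\mathbb{F}_p^*) \le \tau(p-1)\max_{d\mid p-1} d/\ord_d(a) = p^{o(1)}\max_{d\mid p-1} d/\ord_d(a)$, so it suffices to prove that for almost all $p$ one has $\ord_d(a) \ge d\,p^{-1/2-\epsilon}$ for every $d \mid p-1$. This is automatic unless $d > p^{1/2+\epsilon}$, in which case a failure forces $t := \ord_d(a) \le p^{1/2-\epsilon}$ together with $d \mid \gcd(a^t-1,\,p-1)$ and $d > p^{1/2+\epsilon}$. I would then bound the number of exceptional $p \le x$ by
$$\sum_{t \le x^{1/2}}\ \sum_{\substack{d \mid a^t-1 \\ d > x^{1/2+\epsilon}}} \#\{p \le x : p \equiv 1 \pmod d\} \ll \sum_{t \le x^{1/2}}\ \sum_{\substack{d \mid a^t-1 \\ d > x^{1/2+\epsilon}}} \Big(\frac{x}{d}+1\Big),$$
and show this is $o(\pi(x))$. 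The delicate point — and the place where one must use the structure of $a^t-1$ rather than crude divisor bounds — is keeping the contribution of its large divisors under control. Across the whole argument I expect the genuinely hard analytic input to be the lower bound on $\sum_{d \le y} 1/\ord_d(a)$, since it is what pins down the unconditional exponents $5/12$ and $0.293$.
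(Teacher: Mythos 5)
First, a framing point: the paper does not prove this theorem --- it is quoted verbatim from Pomerance and Shparlinski \cite{Pomerance2017} as background --- so your attempt can only be compared against that work and against the paper's internal adaptations of the same machinery (Theorems \ref{normc_n} and \ref{cyclicnormal} for cyclic groups). Your opening reduction is correct and is exactly Lemma \ref{formula} specialized to $\mathbb{F}_p^*$. The gaps are in the three analytic steps, and two of them are fatal rather than cosmetic. For the exponent $5/12$, your modulus $Q = a^k-1$ cannot work: the sequence $a^k-1$ is exponentially sparse (one modulus per dyadic range), so no ``almost all moduli'' least-prime theorem can ever apply to it, and for an individual modulus nothing beyond Linnik's constant ($\approx 5.2$, giving exponent $\approx 0.19$; even GRH only gives exponent $1/2-\epsilon$) is known --- the special shape of $a^k-1$ gives no known leverage on Siegel zeros or zero-density estimates, and ``sieving over a dyadic family'' is vacuous when the family has one element per dyadic range. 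The actual mechanism (visible in the paper's proof of Theorem \ref{normc_n}, which copies the Pomerance--Shparlinski strategy) is to manufacture a \emph{huge} family of admissible moduli: products of $k$ primes $r$ from the Baker--Harman set $\{r : r-1 \mid M_v\}$ of Lemma \ref{shifted_smooth}, all of which satisfy $\ord_m(a) \mid M_v = m^{o(1)}$ and which number $y^{1-0.2961+o(1)}$ up to $y$ --- enough to discard the exceptional moduli in a Linnik-type theorem of exponent $12/5$. It is no accident that $5/12 = 1/(12/5)$ and $0.293 \approx (5/12)(1-0.2961)$; both constants are born from this combination, to which your sketch has no route. The same objection kills your proposed crux for the average order: a lower bound on $\sum_{d \le y} 1/\ord_d(a)$ of power-of-$y$ size cannot come from ``the abnormally large divisor function of $a^t-1$,'' since a large total divisor count says nothing about how many divisors lie \emph{below} $y$; and Bombieri--Vinogradov cannot be invoked for the relevant $d$ because its exceptional set can be as large as the entire sparse family you need.

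Your counting for the almost-all upper bound also fails as written. In $\sum_{t \le x^{1/2}} \sum_{d \mid a^t-1,\, d > x^{1/2+\epsilon}} (x/d+1)$, the number of divisors of $a^t-1$ is only bounded by $\exp(O(t/\log t))$, which for $t$ near $x^{1/2}$ dwarfs every power of $x$; even the ``$+1$'' terms alone are uncontrollable, and you offer no mechanism to fix this (you flag it as ``delicate,'' but the delicacy is the whole proof). The known arguments never enumerate divisors of $a^t-1$. Instead they bound $\ord_d(a)$ from below prime-by-prime via $\ord_d(a) \ge \frac{\lambda(d)}{d}\prod_{r \mid d} \ord_r(a)$ (the paper's Lemma \ref{ord_bound}), control the primes $r$ with $\ord_r(a) < \sqrt{r}/\log r$ using Erd\H{o}s's density bound (Lemma \ref{density}), control square parts (Lemma \ref{core}), and use $\lambda(n) = n^{1+o(1)}$ for almost all $n$ --- this is precisely the structure of the paper's Theorem \ref{cyclicnormal}, with the additional wrinkle here that all of these ``almost all $n$'' statements must be established for the sparse set $n = p-1$, which is a genuine extra difficulty that your plan does not even encounter. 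In summary: the reduction formula and the shape of the three reductions are right, but at each of the three hard analytic steps the proposed mechanism either provably cannot reach the stated exponent or blows up quantitatively.
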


\noindent Pomerance and Shparlinski asked for an extension of these results to other groups. We consider the question of the size of $N(a, G)$ for various families of groups. Using results from number theory, group theory, and probability theory, we obtain results on the size of $N(a, G)$ for cyclic groups, dihedral groups, symmetric groups and the special linear group of degree $2$ over a finite field. 

\noindent Next, we conjecture that, for any $a$, the cyclic groups have the fewest connected components over any groups of a given order. More precisely, 

\begin{conjecture} \label{minconjecture}
\noindent Let $G$ be a group of order $n$. Then $$N(a, G) \ge N(a, C_n).$$
\end{conjecture}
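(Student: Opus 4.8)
The plan is to reduce $N(a,G)$ to a weighted count of elements by order, and then to compare an arbitrary $G$ with $C_n$ by exploiting Frobenius's theorem on the number of solutions of $x^d=e$.

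\textbf{A formula for $N(a,G)$.} First I would pin down the periodic points of $x\mapsto x^a$. An element $x$ is periodic iff $x^{a^k}=x$ for some $k\ge1$, i.e. $\ord(x)\mid a^k-1$, which is possible iff $\gcd(\ord(x),a)=1$; and then the period of $x$ is exactly $\ord_{\ord(x)}(a)$, the multiplicative order of $a$ modulo $\ord(x)$. Since each cycle of length $\ell$ contributes $\ell$ points of period $\ell$, the number of cycles is $\sum_{x\text{ periodic}}1/(\text{period of }x)$. Grouping by order $d$ and writing $s_G(d)$ for the number of elements of order $d$ and $n=|G|$ (orders divide $n$ by Lagrange) gives
\[
N(a,G)=\sum_{\substack{d\mid n\\ \gcd(d,a)=1}}\frac{s_G(d)}{\ord_d(a)},
\]
which for $C_n$ recovers $\sum_{d\mid n,\ \gcd(d,a)=1}\phi(d)/\ord_d(a)$.

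\textbf{Reduction to universal weights.} The counts $s_G(d)$ resist direct comparison across groups, but the cumulative counts $g_G(d):=\#\{x\in G:x^d=e\}=\sum_{e\mid d}s_G(e)$ are controlled: by Frobenius's theorem, $d\mid g_G(d)$ whenever $d\mid n$, so $g_G(d)\ge d=g_{C_n}(d)$ for every $d\mid n$. I would therefore Möbius-invert, $s_G(d)=\sum_{e\mid d}\mu(d/e)g_G(e)$ on the lattice of divisors of the prime-to-$a$ part $n_1$ of $n$, and interchange summation to obtain
\[
N(a,G)=\sum_{e\mid n_1}w_n(e)\,g_G(e),\qquad w_n(e)=\sum_{m\mid(n_1/e)}\frac{\mu(m)}{\ord_{em}(a)},
\]
where the weights $w_n(e)$ depend only on $n$ and $a$, not on $G$. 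If every $w_n(e)\ge0$, then $N(a,G)-N(a,C_n)=\sum_{e\mid n_1}w_n(e)\,(g_G(e)-e)\ge0$, proving the conjecture for all groups at once.

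\textbf{Nonnegativity of the weights.} This is the crux. I would read $1/\ord_N(a)$ as the natural density of the subgroup $A_N=\{k\in\Z:a^k\equiv1\pmod N\}$ of $\Z$. Only squarefree $m=\prod_{i\in S}p_i$ contribute, where $p_1,\dots,p_r$ are the distinct primes of $n_1/e$; setting $D_i=A_{p_i^{\,v_{p_i}(e)+1}}$ one checks the identity $A_{e\prod_{i\in S}p_i}=A_e\cap\bigcap_{i\in S}D_i$, so that inclusion–exclusion collapses the alternating sum to an honest density,
\[
w_n(e)=\sum_{S}(-1)^{|S|}\,\mathrm{density}\Big(A_e\cap\textstyle\bigcap_{i\in S}D_i\Big)=\mathrm{density}\Big(A_e\setminus\textstyle\bigcup_i D_i\Big)\ge0 .
\]
As densities are nonnegative, this closes the argument.

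\textbf{Main obstacle.} The delicacy lives entirely in the last step: when $n_1$ is not squarefree, multiplying $e$ by $p_i$ raises a prime exponent already present in $e$, so the correct ``bad'' events are $D_i=A_{p_i^{\,v_{p_i}(e)+1}}$ rather than $A_{p_i}$, and one must verify $A_{e\prod_{i\in S}p_i}=A_e\cap\bigcap_{i\in S}D_i$ with this choice before the inclusion–exclusion is legitimate. Getting this bookkeeping right is what turns the signed sum into a density; if it goes through, the scheme settles the full conjecture, since Frobenius's theorem holds for arbitrary finite groups. Should the general density identity prove stubborn, the same scheme applies verbatim to nilpotent $G$, where the Sylow decomposition $G=\prod_pG_p$ lets one establish the weight positivity one prime at a time — presumably the route by which the nilpotent case is obtained.
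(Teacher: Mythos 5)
Your proposal is correct, and it in fact proves strictly more than the paper does: the paper establishes Conjecture \ref{minconjecture} only for nilpotent groups (Theorem \ref{nilpotent}), while your scheme applies to every finite group. The step you flag as the main obstacle does go through. Comparing prime exponents, $a^k\equiv 1 \pmod{e\prod_{i\in S}p_i}$ holds if and only if $a^k\equiv 1\pmod{p_i^{v_{p_i}(e)+1}}$ for each $i\in S$ and $a^k\equiv 1\pmod{p^{v_p(e)}}$ for every other prime $p\mid e$; since $D_i=A_{p_i^{v_{p_i}(e)+1}}\subseteq A_{p_i^{v_{p_i}(e)}}$, the conditions that $A_e$ imposes at the primes of $S$ are subsumed rather than contradicted, which is exactly the identity $A_{e\prod_{i\in S}p_i}=A_e\cap\bigcap_{i\in S}D_i$. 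Moreover every set in sight is a union of residue classes modulo $\ord_{n_1}(a)$ (because $\ord_N(a)\mid\ord_{n_1}(a)$ for all $N\mid n_1$), so natural density is finitely additive on this family and the inclusion--exclusion collapse $w_n(e)=\mathrm{density}\bigl(A_e\setminus\bigcup_iD_i\bigr)\ge0$ is legitimate. The rest is as you say: Frobenius's theorem gives that $g_G(e)$ is a positive multiple of $e$ for every $e\mid n$, hence $g_G(e)\ge e=g_{C_n}(e)$, and the nonnegativity of the universal weights turns the pointwise inequality $g_G\ge g_{C_n}$ into $N(a,G)\ge N(a,C_n)$.

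The contrast with the paper's argument is substantial. The paper's proof (Section 4) is a majorization argument that needs nilpotency in two essential ways: Lemma \ref{p_ineq} shows that among $p$-groups the cyclic one dominates the order statistics, which settles $p$-groups since $\ord_{p^b}(a)$ is monotone in $b$; the general nilpotent case is then assembled from the Sylow direct-product decomposition, using multiplicativity of $d\mapsto w_G(d)$ across coprime orders and Lemma \ref{ord}, by induction on the number of primes. That route cannot see past nilpotent groups, because $w_G$ is not multiplicative in general. You sidestep this by trading the counts $w_G(d)$ for the cumulative counts $g_G(d)=\#\{x\in G: x^d=1\}$, which Frobenius's theorem controls uniformly over all finite groups, at the price of having to prove nonnegativity of weights depending only on $n$ and $a$ --- precisely what your density interpretation delivers. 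In short: the paper buys the result on a restricted class of groups from structure theory, while you buy the full conjecture from a counting theorem valid for all groups; your argument deserves to be written up carefully, since it would settle the conjecture outright.
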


\noindent We have verified this conjecture using Sage \cite{Stein2017} for all groups of order at most 1000, except for groups of order $768$, if $a\in\{2, 3, \dotsc, 20\}$. We prove the following partial result: 
\begin{theorem} \label{nilpotent}
Let $G$ be a nilpotent group of order $n$. Then $$N(a, G) \ge N(a, C_n).$$
\end{theorem}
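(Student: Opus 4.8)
The plan is to exploit the nilpotent decomposition $G \cong \prod_p P_p$ into Sylow subgroups together with $C_n \cong \prod_p C_{p^{e_p}}$, where $n = \prod_p p^{e_p}$, and to reduce the whole statement to a single weighted count of elements by order. First I would dispose of the primes dividing $a$: if $p \mid a$, then for any $p$-group $P$ the order of $x^a$ equals $\ord(x)/\gcd(\ord(x),a)$, a proper divisor of $\ord(x)$ whenever $x \neq 1$, so orders strictly drop under $x \mapsto x^a$ and the identity is the only periodic point of $P$. Hence a point of $G$ is periodic iff its components at all primes dividing $a$ are trivial, giving $N(a,G) = N(a,\prod_{p \nmid a} P_p)$ and likewise for $C_n$. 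This reduces the claim to $\gcd(a,n)=1$, which I assume henceforth; here $x \mapsto x^a$ is a bijection of $G$ and of each $P_p$, so every element is periodic.

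Next I would derive an exact formula for $N(a,G)$ from element orders. Since $\gcd(a,n)=1$, the map $x_p \mapsto x_p^a$ preserves the order of each component, hence preserves the order profile $\mathbf{j} = (j_p)_p$ defined by $p^{j_p} = \ord(x_p)$. For $x$ of profile $\mathbf j$ the cycle length is the least $k$ with $a^k \equiv 1 \pmod{p^{j_p}}$ for every $p$, namely $L(\mathbf j) := \operatorname{lcm}_p \ord_{p^{j_p}}(a)$, depending only on $\mathbf j$. Writing $s_{p,j}(P) = \#\{x \in P : \ord(x) = p^j\}$, the elements of profile $\mathbf j$ number $\prod_p s_{p,j_p}(P_p)$ and split into cycles of length $L(\mathbf j)$, so
\[
N(a,G) = \sum_{\mathbf j} \frac{\prod_p s_{p,j_p}(P_p)}{L(\mathbf j)}, \qquad N(a,C_n) = \sum_{\mathbf j}\frac{\prod_p \phi(p^{j_p})}{L(\mathbf j)},
\]
the cyclic case being the specialization $s_{p,j}(C_{p^{e_p}}) = \phi(p^j)$. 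The weights $w(\mathbf j) = 1/L(\mathbf j)$ are common to both sides, so only the two weighted sums need to be compared.

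The heart of the argument is the inequality $\sum_{\mathbf j} w(\mathbf j)\prod_p s_{p,j_p}(P_p) \ge \sum_{\mathbf j} w(\mathbf j)\prod_p \phi(p^{j_p})$, which rests on two structural facts. First, $\ord_{p^j}(a) \mid \ord_{p^{j+1}}(a)$, since reduction modulo $p^j$ is a homomorphism; thus $L(\mathbf j)$ is nondecreasing in each coordinate and $w$ is coordinatewise nonincreasing. Second, by Frobenius's theorem the number of solutions of $x^{p^k}=1$ in the $p$-group $P_p$ is divisible by $p^k$, hence at least $p^k$; that is, the cumulative counts satisfy $\sum_{j \le k} s_{p,j}(P_p) \ge p^k = \sum_{j \le k}\phi(p^j)$ for every $k$, with equality at $k = e_p$ (both total $p^{e_p}$). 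I would then establish the weighted inequality by a one-variable Abel summation used as a building block and iterated over the primes: replacing $\phi(p^{\,\cdot})$ by $s_{p,\cdot}(P_p)$ one prime at a time, each single-coordinate swap is a summation by parts in which the boundary term cancels (equal totals), the increments $w(\ldots,j,\ldots)-w(\ldots,j{+}1,\ldots)$ are nonnegative, and they multiply the nonnegative quantities $\sum_{j\le k}s_{p,j}(P_p)-p^k$.

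The main obstacle is precisely this last step: organizing the multivariate monotone rearrangement so that it runs cleanly, and recognizing that Frobenius's theorem supplies exactly the coordinatewise cumulative domination that the summation by parts consumes, while the divisibility chain $\ord_{p^j}(a)\mid\ord_{p^{j+1}}(a)$ supplies the needed monotonicity of the weight. Once these two inputs are in hand, the reduction to $\gcd(a,n)=1$ and the cycle-counting formula are routine bookkeeping, and the inequality $N(a,G) \ge N(a,C_n)$ follows.
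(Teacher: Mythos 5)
Your proof is correct, and its overall skeleton matches the paper's: reduce to $\gcd(a,n)=1$, express $N(a,G)$ as a sum of $1/\ord_d(a)$ weighted by counts of elements of order $d$, exploit multiplicativity of those counts over the Sylow decomposition, and finish with a majorization argument combining per-prime cumulative domination with monotonicity of the weights. However, both of your key ingredients differ from the paper's in genuine ways. For the per-prime domination $\sum_{j \le k} s_{p,j}(P_p) \ge p^k$ (with equality at $k = e_p$), the paper's Lemma \ref{p_ineq} is proved elementarily: the number of elements of order $p^j$ is a multiple of $\varphi(p^j)$, and a group containing an element of order $p^c$ contains elements of every order $p^b$ with $b < c$; you instead invoke Frobenius's theorem, a heavier but classical tool that delivers the same inequality in one line. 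For the assembly over several primes, the paper inducts on the number of primes using Lemma \ref{ord} (the submultiplicativity $\ord_d(a)\ord_{d'}(a) \ge \ord_{dd'}(a)$), in effect combining the $p$-group case with the product bound $N(a, G \times H) \ge N(a,G)N(a,H)$; you instead keep the exact weight $1/\ord_{\prod_p p^{j_p}}(a) = 1/\operatorname{lcm}_p \ord_{p^{j_p}}(a)$ and run a coordinatewise Abel summation, needing only the divisibility $\ord_{p^j}(a) \mid \ord_{p^{j+1}}(a)$ rather than any submultiplicativity. Your version is more self-contained and sidesteps the delicate direction-of-inequality bookkeeping in the paper's inductive step (whose displayed inequality only comes out right when read as applied to the cyclic/$\varphi$ side and then combined with the product theorem); the paper's version is more modular, reusing Lemma \ref{ord} and the product theorem, which also serve elsewhere in the paper.
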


\noindent In Section $2$, we introduce results used to estimate $N(a, G)$. In Section $3$, we estimate the normal order, average order, and extremal order of $N(a, G)$ for several families of groups. In Section $4$, we prove theorem \ref{nilpotent}. In Section $5$, we discuss further directions and ask several questions.

\subsection{Notation}
\noindent Throughout this paper, $p$ denotes a prime number, $q$ denotes a prime power, and $a$ denotes a positive integer at least $2$. All groups are finite, and group multiplication is always written multiplicatively. 

\noindent For a set $A$, we denote the characteristic function of $A$ by $1_A(x)$. For $g \in G$ a group, let $\vert g \vert$ denote the order of $g$. Let $\ord_{n}(a)$ denote the multiplicative order of $a$ in $\mathbb{Z}/n\mathbb{Z}$. For a group $G$, let $w_G(d)$ denote the number of elements of order $d$. We will often write $w(d)$ for $w_G(d)$ if the group is obvious. Let $C_n$ denote the cyclic group of order $n$, $D_{n}$ denote the dihedral group of order $2n$, $SL_n(\mathbb{F}_q)$ denote the special linear group of degree $n$ over the finite field of $q$ elements and let $S_n$ denote the symmetric group of order $n!$. Let $\lambda$ denote the Carmichael lambda function, i.e. $\lambda(n)$ is the exponent of $(\Z/n\Z)^*$. Let $\varphi$ denote the Euler $\varphi$-function.

\noindent We use standard Vinogradov notation and Landau notation. Recall that the statements $U = O(V)$, $U \ll V$ and $V \gg U$ all mean $\vert U \vert \le cV$ for some $c >0$. We also use the notation $o(1)$ to denote a quantity that tends to $0$ as some parameter goes to infinity.  The dependency of the constant on a parameter will be denoted as a subscript. We say almost all elements of a set $S \subseteq \mathbb{N}$ have a property $P$ if the proportion of the elements of $S$ that have $P$ and are at most $n$ is $1 + o(1)$.

\section{General tools} 
\noindent Our main tool for estimating $N(a, G)$ is the following lemma:
\begin{lemma}\label{formula}Let $\rho$ denote the largest factor of $\vert G \vert$ relatively prime to $a$. Then \begin{equation*}
N(a, G) = \sum_{d \vert \rho} \frac{w(d)}{\ord_{d}(a)}.
\end{equation*}
\end{lemma}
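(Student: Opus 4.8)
The plan is to analyze the functional graph $G(a,G)$ by separating each element into a part that the map $x\mapsto x^a$ eventually sends onto a cycle and the "transient" part that is killed off. The crucial observation is that the number of cycles $N(a,G)$ equals the number of connected components, and each component contains exactly one cycle, so I only need to count cycles. The elements that eventually land on cycles are precisely those lying in cycles, and I would first characterize these. If $g\in G$ has order $m$, then the orbit of $g$ under repeated $a$-th powering is governed by the multiplicative behavior of $a$ modulo $m$. Writing $m=m'\cdot s$ where $m'$ is the largest divisor of $m$ coprime to $a$ and $s$ collects the prime factors of $m$ shared with $a$, repeated exponentiation eventually forces the "$s$-part" of the order to stabilize at $1$, so the periodic elements are exactly those whose order divides $\rho$, the largest divisor of $\vert G\vert$ coprime to $a$.

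The key computation is then to count cycles among these periodic elements. I would restrict attention to elements of each fixed order $d\mid\rho$. On the set of elements of order exactly $d$, the map $x\mapsto x^a$ acts as a genuine permutation, because $\gcd(a,d)=1$ means $x\mapsto x^a$ is a bijection on the cyclic group $\langle x\rangle$ and preserves order $d$. The cycles of this permutation all have the same length, namely $\ord_d(a)$: indeed $x^{a^k}=x$ for an element of order $d$ if and only if $a^k\equiv 1\pmod d$, so the first return time is exactly the multiplicative order of $a$ modulo $d$. Since there are $w(d)$ elements of order $d$, partitioned into cycles each of length $\ord_d(a)$, the number of cycles contributed by order-$d$ elements is $w(d)/\ord_d(a)$. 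Summing over all $d\mid\rho$ gives the claimed formula.

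The main obstacle I anticipate is justifying rigorously that \emph{every} cycle consists of elements all of the same order dividing $\rho$, and that no cycles are missed — i.e. that each connected component's cycle lies entirely within the periodic set characterized above. For this I would argue that $x\mapsto x^a$ maps elements of order $d$ to elements of order $d/\gcd(d,a^k)$-type quantities, and that iterating strictly decreases the $\gcd(\vert g\vert, a^\infty)$-part until it reaches the coprime-to-$a$ part; hence every element's forward orbit converges to the periodic set, and the periodic set is exactly $\{g: \vert g\vert \mid \rho\}$. The second subtlety is verifying that $x\mapsto x^a$ genuinely permutes (rather than merely maps into) the order-$d$ elements when $\gcd(a,d)=1$: this follows because on any cyclic group $\langle x\rangle\cong C_d$ the map is invertible, with inverse $x\mapsto x^{b}$ where $ab\equiv 1\pmod d$, and invertibility on each such cyclic subgroup assembles into a permutation of the whole order-$d$ set. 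Once these two points are secured, the cycle-length computation and the final summation are routine.
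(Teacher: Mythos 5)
Your proof is correct and takes essentially the same approach as the paper's: identify the periodic points as exactly the elements whose order is coprime to $a$ (equivalently, divides $\rho$), observe that the $w(d)$ elements of order $d$ split into cycles of length exactly $\ord_d(a)$, and sum over $d \mid \rho$. You spell out the details (the permutation property on order-$d$ elements and the first-return-time computation) that the paper's terser argument leaves implicit, but the underlying decomposition is identical.
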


\begin{proof}
We generalize an argument of Chou and Shparlinski in \cite{Chou2004}. Consider the map $x \mapsto x^a$. Then let $t \ge 0, c > 0$ minimal such that $ x^{a^t} =  x^{a^{t + c}}$ for all $x$, which exist since the map $x \mapsto x^a$ is preperiodic.  Let  $d$ denote the order of $x$. Then $d \vert a^t (a^c - 1)$, so $t = 0$ if and only if $\gcd(a, d) = 1$. If $t = 0$, then $x$ lies in a cycle of length $\ord_d(a)$, and there are $w(d)$ elements that lie in such cycles, showing the result. 
\end{proof}

\noindent We will often use this result in the form $$N(a, G) = \sum_{\substack{g \in G \\ \gcd(\vert g \vert, a) = 1}} \frac{1}{\ord_{\vert g \vert}(a)},$$ which follows from lemma \ref{formula} by grouping terms by order.
We observe that if a group $G$ has many element of large order, then $N(a, G)$ is likely to be small. This gives some justification to conjecture \ref{minconjecture}.
We will also make use of the following lemma.
\begin{lemma}\label{sum} Let $H_1, \dotsc, H_n \le G$, and suppose $H_i \cap H_j = \{e\}$ for $i \not = j$, where $e$ is the identity of $G$. Then
$$N(a, G) \ge \sum_{i=1}^{n} N(a, H_i) - n + 1.$$
\end{lemma}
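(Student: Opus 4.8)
The plan is to apply the element-wise form of Lemma~\ref{formula}, namely
$$N(a, G) = \sum_{\substack{g \in G \\ \gcd(\vert g \vert, a) = 1}} \frac{1}{\ord_{\vert g \vert}(a)},$$
and to exploit the fact that each summand depends only on the order $\vert g \vert$, which is intrinsic to $g$: if $g$ lies in a subgroup $H \le G$, then its order computed in $H$ equals its order computed in $G$. Consequently the same formula computes $N(a, H_i)$ when the sum is restricted to $g \in H_i$. I would introduce the nonnegative weight $f(g) := 1_{\{\gcd(\vert g \vert, a) = 1\}}(g)/\ord_{\vert g \vert}(a)$, so that $N(a, K) = \sum_{g \in K} f(g)$ for every subgroup $K \le G$, including $K = G$.

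First I would observe that, since $f \ge 0$ and $\bigcup_{i} H_i \subseteq G$,
$$N(a, G) = \sum_{g \in G} f(g) \ge \sum_{g \in \bigcup_{i=1}^n H_i} f(g).$$
Next I would use the hypothesis $H_i \cap H_j = \{e\}$ for $i \ne j$ to decompose the union as the disjoint union of $\{e\}$ with the sets $H_i \setminus \{e\}$, giving
$$\sum_{g \in \bigcup_{i} H_i} f(g) = f(e) + \sum_{i=1}^n \sum_{g \in H_i \setminus \{e\}} f(g) = f(e) + \sum_{i=1}^n \bigl( N(a, H_i) - f(e) \bigr).$$

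The only point requiring care is the contribution of the identity. Since $\vert e \vert = 1$, $\gcd(1,a)=1$, and $\ord_1(a) = 1$, we have $f(e) = 1$; equivalently, the identity forms its own cycle (a loop) and is therefore counted exactly once in each of $N(a, G)$ and $N(a, H_i)$. Substituting $f(e) = 1$ yields
$$N(a, G) \ge 1 + \sum_{i=1}^n \bigl( N(a, H_i) - 1 \bigr) = \sum_{i=1}^n N(a, H_i) - n + 1,$$
as desired. There is no substantive obstacle: the argument is an inclusion–exclusion count in which the shared identity element accounts precisely for the correction term $-n+1$, and nonnegativity of $f$ is what lets us pass from $G$ to the union of the $H_i$.
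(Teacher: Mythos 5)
Your proof is correct, but it takes a genuinely different route from the paper's. The paper argues graph-theoretically: the subgraph of $G(a,G)$ induced by $H_i$ is isomorphic to $G(a,H_i)$, each connected component contains a unique cycle, and the components arising from different subgroups cannot be connected to one another in $G(a,G)$ except through the component of the identity, so counting components gives the bound. You instead use the element-wise form of Lemma~\ref{formula}: with the nonnegative weight $f(g) = 1_{\{\gcd(\vert g \vert, a)=1\}}/\ord_{\vert g \vert}(a)$, which is intrinsic to $g$ because orders do not change when passing to a subgroup, you have $N(a,K)=\sum_{g\in K}f(g)$ for every $K \le G$, and the lemma reduces to nonnegativity of $f$, the disjoint decomposition of $\bigcup_i H_i$ into $\{e\}$ and the sets $H_i\setminus\{e\}$, and the computation $f(e)=1$. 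Your approach buys rigor at precisely the delicate point of the paper's argument: the assertion that components of distinct $H_i$ cannot merge is stated without detail in the paper, and justifying it requires noting that each $H_i$ is closed under the power map, so two forward orbits can only meet inside $H_i\cap H_j=\{e\}$; your mass-counting argument sidesteps this entirely. What the paper's approach buys in exchange is structural information --- it identifies which components actually coincide, exhibiting the inequality as an honest count of cycles rather than a lower bound by restriction of a sum. Both proofs are short and valid; yours is arguably the cleaner derivation given that the element-wise formula is already established earlier in the section.
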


\begin{proof}
Note that the subgraph in $G(a, G)$ induced by $H_i$ is isomorphic to $G(a, H_i)$. These induced subgraphs overlap only at the identity, and, in these induced subgraphs, each connected component contains a unique cycle. In $G(a, G)$, these induced subgraphs cannot be connected to each other, except for the connected component containing the identity. 
\end{proof}

\noindent Before proving the last general result, we state a lemma

\begin{lemma}\label{ord} If $\frac{d d'}{\gcd(d, d')} = n$, then $\ord_d(a) \ord_{d'}(a) \ge \ord_n(a)$.
\end{lemma}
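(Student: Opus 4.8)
The plan is to recognize that the hypothesis $\frac{dd'}{\gcd(d,d')} = n$ is just the statement $n = \operatorname{lcm}(d, d')$, and then to prove the stronger identity $\ord_n(a) = \operatorname{lcm}(\ord_d(a), \ord_{d'}(a))$, from which the claimed inequality follows at once, since the least common multiple of two positive integers never exceeds their product. First I would observe that all three orders are well-defined in the intended application: $d$ and $d'$ divide the coprime-to-$a$ part $\rho$ of $\vert G \vert$, so $\gcd(a,d) = \gcd(a,d') = 1$ and hence $\gcd(a,n) = 1$ as well.

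The main ingredient is the defining property of multiplicative order, namely that $a^k \equiv 1 \pmod m$ if and only if $\ord_m(a) \mid k$, combined with a divisibility translation for the least common multiple. For any nonnegative integer $k$, we have $n \mid a^k - 1$ if and only if both $d \mid a^k - 1$ and $d' \mid a^k - 1$, precisely because $n = \operatorname{lcm}(d,d')$ divides a number exactly when both $d$ and $d'$ do. Rephrasing each divisibility via the order characterization, $\ord_n(a) \mid k$ if and only if $\ord_d(a) \mid k$ and $\ord_{d'}(a) \mid k$, which in turn holds if and only if $\operatorname{lcm}(\ord_d(a), \ord_{d'}(a)) \mid k$.

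Since two positive integers that divide exactly the same set of integers $k$ must be equal, this yields $\ord_n(a) = \operatorname{lcm}(\ord_d(a), \ord_{d'}(a))$. Finally, using $\operatorname{lcm}(x,y) = \frac{xy}{\gcd(x,y)} \le xy$ for positive integers $x,y$, I would conclude $\ord_n(a) = \operatorname{lcm}(\ord_d(a), \ord_{d'}(a)) \le \ord_d(a)\,\ord_{d'}(a)$, which is exactly the desired bound. There is no real obstacle here; the only step requiring a moment's attention is the lcm-divisibility equivalence in the previous paragraph, which carries the entire argument, and the passage from the exact lcm identity to the product inequality is then immediate.
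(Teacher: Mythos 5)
Your proof is correct, and it takes a somewhat different route from the paper's. The paper's proof is a one-line direct verification: from $d \mid a^{\ord_d(a)} - 1$ and $d' \mid a^{\ord_{d'}(a)} - 1$ it deduces that both $d$ and $d'$ divide $a^{\ord_d(a)\ord_{d'}(a)} - 1$, hence so does their least common multiple $n$, which immediately gives $\ord_n(a) \mid \ord_d(a)\ord_{d'}(a)$. You instead characterize the full set of exponents $k$ with $n \mid a^k - 1$ and thereby prove the sharper identity $\ord_n(a) = \operatorname{lcm}\bigl(\ord_d(a), \ord_{d'}(a)\bigr)$, then descend to the inequality via $\operatorname{lcm}(x,y) \le xy$. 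Both arguments rest on the same two facts---the divisibility characterization of multiplicative order and $n = \operatorname{lcm}(d,d')$---but what yours buys is the exact value of $\ord_n(a)$, which in particular shows the lemma's inequality is an equality precisely when $\ord_d(a)$ and $\ord_{d'}(a)$ are coprime; what the paper's buys is brevity, exhibiting a single witnessing exponent rather than characterizing all of them. Your remark on well-definedness of the orders (coprimality of $d$, $d'$, $n$ to $a$) addresses a hypothesis the paper leaves implicit; it is the correct reading of how the lemma is applied.
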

\begin{proof}
As $d \mid a^{\ord_d(a)} - 1$ and $d' \mid a^{\ord_{d'}(a)} - 1$, $n \mid a^{\ord_d(a) \ord_{d'}(a)} - 1$.
\end{proof}

\begin{theorem} Let $G, H$ finite groups. Then 
$$N(a, G \times H) \ge N(a, G) N(a, H).$$
\end{theorem}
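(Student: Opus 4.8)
The plan is to work entirely with the second form of Lemma \ref{formula}, namely
$$N(a, G) = \sum_{\substack{g \in G \\ \gcd(\vert g \vert, a) = 1}} \frac{1}{\ord_{\vert g \vert}(a)},$$
applied to all three groups $G$, $H$, and $G \times H$. The elements of $G \times H$ are pairs $(g, h)$, and the order of such a pair is $\vert (g,h) \vert = \operatorname{lcm}(\vert g \vert, \vert h \vert)$. First I would record the elementary observation that $\gcd(\operatorname{lcm}(\vert g \vert, \vert h \vert), a) = 1$ if and only if $\gcd(\vert g \vert, a) = 1$ and $\gcd(\vert h \vert, a) = 1$, so that the index set of the sum for $N(a, G \times H)$ is exactly the product of the two index sets appearing in $N(a, G)$ and $N(a, H)$.

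With the index sets matched up, the computation reduces to a termwise inequality. Writing $d = \vert g \vert$ and $d' = \vert h \vert$, we have $n := \operatorname{lcm}(d, d') = \frac{d d'}{\gcd(d, d')}$, so Lemma \ref{ord} gives $\ord_d(a)\,\ord_{d'}(a) \ge \ord_n(a)$, and hence
$$\frac{1}{\ord_{\operatorname{lcm}(\vert g \vert, \vert h \vert)}(a)} \ge \frac{1}{\ord_{\vert g \vert}(a)\,\ord_{\vert h \vert}(a)}.$$
Summing this over all admissible pairs $(g,h)$ and then factoring the resulting double sum as a product of two single sums yields
\begin{align*}
N(a, G \times H)
&= \sum_{\substack{(g,h) \in G \times H \\ \gcd(\vert g \vert, a) = 1,\ \gcd(\vert h \vert, a) = 1}} \frac{1}{\ord_{\operatorname{lcm}(\vert g \vert, \vert h \vert)}(a)} \\
&\ge \sum_{\substack{g \in G \\ \gcd(\vert g \vert, a) = 1}} \frac{1}{\ord_{\vert g \vert}(a)} \cdot \sum_{\substack{h \in H \\ \gcd(\vert h \vert, a) = 1}} \frac{1}{\ord_{\vert h \vert}(a)} \\
&= N(a, G)\, N(a, H),
\end{align*}
which is the desired bound.

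There is no genuine obstacle here: once one commits to the summation form of Lemma \ref{formula}, the proof is essentially forced, and Lemma \ref{ord} was evidently stated precisely to supply the termwise inequality needed. The only points requiring a moment's care are the two structural facts about direct products — that orders multiply via $\operatorname{lcm}$ and that the coprimality-to-$a$ condition on the pair decouples into the two coordinate conditions — both of which are routine. I would therefore expect the writeup to be short, with the bulk of the argument being the displayed chain above.
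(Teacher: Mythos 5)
Your proof is correct and is essentially the paper's own argument: the paper works with the divisor-indexed form $\sum_{d \mid \rho} w(d)/\ord_d(a)$ and regroups the product over pairs $(d,d')$ by $k = dd'/\gcd(d,d')$, which is exactly your element-wise computation after grouping terms by order. Both proofs hinge on the same two ingredients --- the termwise inequality from Lemma \ref{ord} and the fact that orders in a direct product combine via least common multiples --- so the difference is purely one of bookkeeping.
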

\begin{proof}
Let $\rho_1$ and $\rho_2$ be the largest divisors of $\vert G \vert$ and $\vert H \vert$ coprime to $a$ respectively. Then
\begin{equation*} \begin{split}
\left ( \sum_{d \mid \rho_1} \frac{w_G(d)}{\ord_{d}(a)} \right) \left (\sum_{d' \mid \rho_2} \frac{w_H(d')}{\ord_{d'}(a)} \right) & = \sum_{d \mid \rho_1, d' \mid \rho_2} \frac{w_G(d) w_H(d')}{\ord_d(a) \ord_{d'}(a)} \\
& = \sum_{k \mid \rho_1 \rho_2} \sum_{\substack{ d \mid \rho_1, d' \mid \rho_2,\\  d d'/\gcd(d, d') = k}} \frac{w_G(d) w_H(d')}{\ord_d(a) \ord_{d'}(a)} \\
& \le \sum_{k \mid \rho_1 \rho_2} \sum_{\substack{ d \mid \rho_1, d' \mid \rho_2,\\  d d'/\gcd(d, d') = k}} \frac{w_G(d) w_H(d')}{\ord_k(a)} \\
& = \sum_{k \mid \rho_1 \rho_2} \frac{w_{G \times H}(k)}{\ord_k(a)} \\
& = N(a, G \times H),
\end{split} \end{equation*}
where in the inequality we use lemma \ref{ord}.
\end{proof}

\section{Size of $N(a, G)$}

\subsection{Cyclic groups}

\noindent We show results on the average order, normal order, and extremal order of $N(a, C_n)$.

\begin{theorem}\label{normc_n} Let $\delta = 0.2961$. Then \begin{equation*}
\frac{1}{x} \sum_{n \le x} N(a, C_n) \ge x^{1 - \delta + o(1)}.
\end{equation*}
\end{theorem}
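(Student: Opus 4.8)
The plan is to reduce the average to a sum over integers with anomalously small multiplicative order, and then to produce enough such integers. Specializing Lemma \ref{formula} to $G = C_n$, where $w_{C_n}(d) = \varphi(d)$ for $d \mid n$ and $0$ otherwise, gives $N(a, C_n) = \sum_{d \mid n,\ \gcd(d,a)=1} \varphi(d)/\ord_d(a)$. Discarding all but the term $d = n$ (valid whenever $\gcd(n,a) = 1$) yields the pointwise bound $N(a, C_n) \ge \varphi(n)/\ord_n(a)$. Summing, it suffices to prove
\[ \frac{1}{x} \sum_{\substack{n \le x \\ \gcd(n,a) = 1}} \frac{\varphi(n)}{\ord_n(a)} \ge x^{1 - \delta + o(1)}. \]
Since $\varphi(n) = n^{1 + o(1)}$, the size of this sum is governed entirely by how small $\ord_n(a)$ can be made for a family of $n \le x$ of positive exponent.

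Next I would construct many $n$ with small order. If $n$ is a squarefree product of distinct primes $\ell$ each having small multiplicative order, say $\ord_\ell(a) \le \ell^{\beta}$, then by the Chinese remainder theorem $\ord_n(a) = \operatorname{lcm}_{\ell \mid n} \ord_\ell(a) \le \prod_{\ell \mid n} \ord_\ell(a) \le n^{\beta}$, so that $\varphi(n)/\ord_n(a) \gg n^{1 - \beta}/\log\log n$. Restricting the sum to such $n$ lying in $(x/2, x]$, the average is at least $x^{-\beta + o(1)}$ times the number of these integers. The primes of small order are exactly those dividing $a^j - 1$ for some small $j$, so the available primes, and hence this family, are controlled by the factorizations of the numbers $a^j - 1$; the same information lets one instead work directly with the divisors of a single well-chosen $a^j - 1$ lying in the dyadic window near $x$, for which $\ord_n(a) \le j$ is automatic.

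The main obstacle is the resulting counting-and-optimization problem: one must lower bound the number of integers up to $x$ composed of primes of small order (equivalently, the number of divisors of $a^j - 1$ in a short interval), and then optimize the exponent $\beta$ (equivalently the ratio $j / \log_a x$) against the density of small-order primes. This is precisely the circle of ideas underlying the average-order bound of Pomerance and Shparlinski \cite{Pomerance2017}, whose argument for the shifted primes $p - 1$ produces the exponent $1 - 1/\sqrt{2}$; carrying out the analogous estimate over all $n \le x$, with the requisite sieve and divisors-in-intervals input from analytic number theory, is what yields the stated value $\delta = 0.2961$. I expect essentially all of the difficulty to reside in this number-theoretic density estimate, the reduction steps above being routine.
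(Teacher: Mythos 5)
Your opening reduction is exactly the paper's: specialize Lemma \ref{formula} to $C_n$, keep only the term $d=n$, and conclude $N(a,C_n)\ge \varphi(n)/\ord_n(a)$, so that everything hinges on exhibiting $x^{1-\delta+o(1)}$ integers $n$ of size $x^{1+o(1)}$ whose order $\ord_n(a)$ is $x^{o(1)}$. But that is precisely the step you leave as a black box, and both mechanisms you sketch for it fail with current knowledge. First, ``many primes $\ell$ with $\ord_\ell(a)\le \ell^\beta$'' cannot be produced: such primes divide $\prod_{j\le \ell^\beta}(a^j-1)$, so there are at most $O(y^{2\beta}\log a)$ of them up to $y$, and no nontrivial \emph{lower} bound on their count is known (unconditionally one only gets the polylog-many primes dividing $a^j-1$ for $j\le \log_a y$, whose products cannot even reach size $x$); this is tied to the unknown multiplicative structure of the numbers $a^j-1$. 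Second, divisors of a single $a^j-1$ with $j\asymp \log_a x$ are too few: the divisor bound gives at most $\exp(O(j/\log j))=x^{o(1)}$ of them in total, far short of $x^{1-\delta}$; and for larger $j$ one cannot count divisors of $a^j-1$ lying below $x$, because nothing is known about the sizes of the prime factors of $a^j-1$. Note also that your bound $\ord_n(a)\le\prod_{\ell\mid n}\ord_\ell(a)$ is the lossy product bound; the proof needs the orders to divide a \emph{common} small modulus, not merely to be individually small.

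The missing idea --- and the actual source of the constant $0.2961$ --- is the Baker--Harman theorem on smooth shifted primes (Lemma \ref{shifted_smooth}). Set $v=\log x/\log\log x$, $M_v=\operatorname{lcm}(1,\dots,v)=x^{o(1)}$, and $w=v^{1/0.2961}$; Baker--Harman supplies a set $\mathcal{Q}$ of at least $w/(\log w)^\kappa$ primes $p\in[w/(\log w)^\kappa,w]$ with $p-1\mid M_v$. By Fermat's little theorem, $\ord_p(a)\mid p-1\mid M_v$ for \emph{every} $a$, so for any product $m$ of distinct primes from $\mathcal{Q}$ one still has $\ord_m(a)\mid M_v$, i.e.\ $\ord_m(a)=x^{o(1)}$, no matter how many primes are multiplied. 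Taking products of $k\approx \log x/\log w$ distinct primes from $\mathcal{Q}$ yields $\binom{|\mathcal{Q}|}{k}=x^{1-0.2961+o(1)}$ integers $m=x^{1+o(1)}$, each contributing $\varphi(m)/\ord_m(a)=x^{1+o(1)}$ to the sum, which gives the theorem. So $\delta=0.2961$ is Baker--Harman's smoothness exponent imported wholesale; it is not the output of a sieve or of an optimization over $\beta$ (or over $j$) of the kind you describe, and no such optimization is known to close the gap your proposal leaves open.
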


\begin{theorem} \label{cyclicextremal}
For any fixed $a$, there exist infinitely many $n$ such that $$N(a, C_n) \ge n^{1 + o(1)}.$$
\end{theorem}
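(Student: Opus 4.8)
The plan is to apply Lemma~\ref{formula} to the cyclic group and then exhibit an explicit infinite family of moduli $n$ forcing the resulting sum to be close to its maximal possible value. In $C_n$ the number of elements of order $d$ is exactly $\varphi(d)$ for each $d \mid n$, so with $\rho$ the largest divisor of $n$ coprime to $a$, Lemma~\ref{formula} reads
$$N(a, C_n) = \sum_{d \mid \rho} \frac{\varphi(d)}{\ord_d(a)}.$$

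The key step is the choice $n = a^k - 1$, with $k \to \infty$ through the positive integers. First, $\gcd(a, a^k - 1) = 1$, so $\rho = n$ and the sum runs over all $d \mid n$. Second, and crucially, every divisor $d$ of $n = a^k - 1$ satisfies $a^k \equiv 1 \pmod d$, so $\ord_d(a) \mid k$ and in particular $\ord_d(a) \le k$ holds simultaneously for all $d \mid n$. Substituting this uniform bound into the denominators and using $\sum_{d \mid n} \varphi(d) = n$ gives
$$N(a, C_n) \ge \frac{1}{k} \sum_{d \mid n} \varphi(d) = \frac{n}{k}.$$

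It remains to convert this into the stated exponent. Since $n + 1 = a^k$ we have $k = \log_a(n+1) = O(\log n)$, hence $\log k = O(\log \log n)$ and
$$\frac{\log N(a, C_n)}{\log n} \ge 1 - \frac{\log k}{\log n} = 1 - o(1)$$
as $k \to \infty$. Because $N(a, C_n) \le \sum_{d \mid n} \varphi(d) = n$ always holds, the exponent is in fact $1 + o(1)$, and letting $k$ range over the positive integers produces infinitely many such $n$.

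I expect no serious obstacle once the family $n = a^k - 1$ is identified; the only point requiring care is that this single choice simultaneously secures the coprimality condition $\rho = n$ and the uniform order bound $\ord_d(a) \le k$, which together are exactly what make a crude term-by-term estimate suffice. As an alternative that avoids even the identity $\sum_{d \mid n}\varphi(d) = n$, one could retain only the term $d = n$ and combine $\ord_n(a) = k$ with the standard bound $\varphi(n) \gg n/\log\log n$ to reach the same conclusion.
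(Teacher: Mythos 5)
Your proof is correct and follows essentially the same route as the paper: both choose $n = a^k - 1$ and exploit the fact that $a^k \equiv 1 \pmod{n}$ forces $\ord_d(a) \le k$ for the relevant divisors. The paper simply keeps the single term $d = n$ (getting $N(a,C_n) \ge \varphi(n)/k \gg n/(\log n \log\log n)$), which is exactly the alternative you sketch at the end; your summing over all divisors is a harmless, slightly sharper variant of the same idea.
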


\begin{theorem}\label{cyclicnormal} For almost all $n$, we have that 
$$N(a, C_n) \le n^{1/2 + o(1)}.$$
\end{theorem}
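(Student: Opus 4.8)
The plan is to start from Lemma~\ref{formula}. For $C_n$ we have $w(d)=\varphi(d)$ for every $d\mid n$, so writing $\rho$ for the largest divisor of $n$ coprime to $a$ gives $N(a,C_n)=\sum_{d\mid\rho}\varphi(d)/\ord_d(a)$. Since the number of divisors $\tau(n)$ satisfies $\tau(n)=n^{o(1)}$ for every $n$, the whole sum is controlled by its largest term up to a factor $n^{o(1)}$, and I would first reorganize it according to the value $\ell=\ord_d(a)$. Because $\ord_d(a)\mid\ord_\rho(a)$ whenever $d\mid\rho$, only divisors $\ell$ of $L:=\ord_\rho(a)$ occur, and every $d$ contributing to a given $\ell$ divides $\gcd(\rho,a^\ell-1)$. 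Using $\sum_{d\mid m}\varphi(d)=m$ I obtain the clean bound
\[
N(a,C_n)\le\sum_{\ell\mid L}\frac{\gcd(n,a^\ell-1)}{\ell}\le\tau(L)\,\max_{\ell\mid L}\frac{\gcd(n,a^\ell-1)}{\ell}.
\]
As $L\le n$ forces $\tau(L)=n^{o(1)}$, the problem reduces to showing that, for almost all $n$, one has $\gcd(n,a^\ell-1)\le\ell\,n^{1/2+o(1)}$ for every $\ell$.

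The dominant term is $\ell=L$: since $\rho\mid a^{L}-1$ while $n/\rho$ is built from primes dividing $a$, one checks $\gcd(n,a^{L}-1)=\rho$, so this term equals $\rho/\ord_\rho(a)$. For almost all $n$ the coprime-to-$a$ part satisfies $\rho\ge n^{1-o(1)}$, so the heart of the matter is the lower bound $\ord_n(a)\ge n^{1/2-o(1)}$ for almost all $n$. Equivalently, I must show that the exceptional set $\{n\le x:\ord_n(a)\le x^{1/2-\varepsilon}\}$ has size $o(x)$ for each fixed $\varepsilon>0$; a dyadic decomposition in $n$ then upgrades this uniform statement to the pointwise bound $N(a,C_n)\le n^{1/2+o(1)}$ for almost all $n$, after absorbing the smaller terms $\ell<L$ (which only contribute additional $\gcd$'s, each relaxed to the same threshold). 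It is worth noting for context that the normal order of Carmichael's function (Erd\H{o}s--Pomerance--Schmutz), $\lambda(n)=n^{1-o(1)}$ for almost all $n$, shows that the typical order of $N(a,C_n)$ is in fact only $n^{o(1)}$; thus all the content lies in removing the sparse set of $n$ with anomalously small $\ord_n(a)$. To count that set directly I would use that $\ord_n(a)=\ell$ forces $n\mid a^\ell-1$, so the exceptional $n$ are divisors of $a^\ell-1$ for some $\ell\le x^{1/2-\varepsilon}$, and I would sum $x/e$ over the relevant large divisors $e$ of $a^\ell-1$ and over $\ell$.

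The main obstacle is precisely this count for $\ell$ in the upper range near $x^{1/2}$. The naive divisor estimate $\tau(a^\ell-1)=a^{o(\ell)}=\exp(o(\ell))$ is catastrophically large when $\ell$ is a power of $x$, so the crude union bound over divisors of $a^\ell-1$ fails outright; one genuinely needs the finer information that a typical $n$ cannot be a large divisor of $a^\ell-1$ with small order $\ell$, which is exactly the distribution-of-multiplicative-order input underlying Theorem~\ref{pom}. I therefore expect the argument to run parallel to the Pomerance--Shparlinski treatment of $N(a,\mathbb{F}_p^*)$, with the integer $n$ playing the role that $p-1$ plays there. The analysis should in fact be somewhat cleaner in the present case, because $n$ ranges over all integers rather than over shifted primes, so the needed order statistics can be invoked in their unconditional form. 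Once the exceptional set is shown to have size $o(x)$, the reduction of the first paragraph closes the argument.
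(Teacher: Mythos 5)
Your opening reduction is sound as far as it goes: for $C_n$ Lemma~\ref{formula} gives $N(a,C_n)=\sum_{d\mid\rho}\varphi(d)/\ord_d(a)$, grouping by $\ell=\ord_d(a)$ and using $\sum_{d\mid m}\varphi(d)=m$ yields $N(a,C_n)\le\sum_{\ell\mid L}\gcd(n,a^\ell-1)/\ell$, and everything would indeed follow from a bound of the shape $\ord_n(a)\ge n^{1/2-o(1)}$ for almost all $n$. (Your "absorbing the smaller terms $\ell<L$" silently uses that $g/\ord_g(a)\le n/\ord_n(a)$ for every $g\mid n$; this is true, via $\ord_{p^k}(a)\mid p^{k-j}\ord_{p^j}(a)$ for $p^j\| g$, $p^k\| n$, but it needs to be said.) The problem is that the proof stops exactly where the theorem's difficulty begins. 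The statement "for almost all $n$, $\ord_n(a)\ge n^{1/2-o(1)}$" \emph{is} the content of the theorem, and you do not prove it: you correctly observe that the union bound over divisors of $a^\ell-1$ fails outright, and then appeal to "the distribution-of-multiplicative-order input underlying Theorem~\ref{pom}" and an argument "parallel to Pomerance--Shparlinski." That input does not transfer. The unconditional order statistics behind Theorem~\ref{pom} concern $\ord_p(a)$ for \emph{prime} moduli, where the union bound does work because $a^\ell-1$ has only $O(\ell\log a/\log\ell)$ prime factors; for composite moduli no such count is available, and passing from primes to composites is precisely the missing step, not a routine transposition. Your closing claim that the composite case "should in fact be somewhat cleaner" is backwards.

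What fills this hole in the paper is the Kurlberg--Pomerance mechanism: Lemma~\ref{ord_bound}, $\ord_n(a)\ge\frac{\lambda(n)}{n}\prod_{p\mid n}\ord_p(a)$, which converts per-prime order bounds into a bound modulo $n$; the Erd\H{o}s--Murty estimate (Lemma~\ref{density}) that $\ord_p(a)\ge\sqrt{p}/\log p$ outside a sparse set $B$ of primes; Lemmas~\ref{bad} and~\ref{core}, showing that for almost all $n$ both the $B$-part and the square part of $n$ are below $\log n$; and the normal order $\lambda(n)=n^{1+o(1)}$ (Theorem~\ref{lambda}), transferred to divisors of $n$ via Lemma~\ref{lambdabound}. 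Together these give $d/\ord_d(a)\le n^{1/2+o(1)}$ simultaneously for all $d\mid n$, for almost all $n$, which is exactly the estimate your reduction requires. Note that you cite the normal order of $\lambda$ only as "context," when in fact it is a load-bearing ingredient. So the skeleton of your argument is compatible with the paper's proof, but the analytic core --- the passage from orders modulo primes to the order modulo a typical composite $n$ --- is absent, and without it the proposal is not a proof.
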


\begin{remark}
Under the Elliott-Halberstam conjecture, we can remove $\delta$ from theorem \ref{normc_n}, i.e. we can show that $\frac{1}{x} \sum_{n \le x} N(a, C_n) \ge x^{1 + o(1)}$. Under the generalized Riemann hypothesis, we can remove the $1/2$ from theorem \ref{cyclicnormal} and show that for almost all $n$, $N(a, C_n) \le n^{o(1)}$.
\end{remark}

\noindent In conjunction with the following lemma, the above theorems immediately give results on dihedral groups.

\begin{lemma}
If $a$ is even, then $N(a, D_n) = N(a, C_n)$. If $a$ is odd, then $N(a, D_n) = n + N(a, C_n)$.
\end{lemma}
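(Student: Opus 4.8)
The plan is to use Lemma~\ref{formula} applied to the dihedral group $D_n$, which requires knowing the order distribution $w_{D_n}(d)$. Recall that $D_n$ has $n$ rotations, forming a cyclic subgroup $\langle r \rangle \cong C_n$, together with $n$ reflections. Every reflection is an involution, so it has order $2$, while the rotation of order $d$ (for each $d \mid n$) contributes exactly $\varphi(d)$ elements, matching the count inside $C_n$. Thus $w_{D_n}(d) = w_{C_n}(d)$ for every $d > 2$, and the only discrepancy between $D_n$ and $C_n$ is the contribution of the $n$ reflections to the count of order-$2$ elements: writing $\epsilon = 1$ if $n$ is even and $0$ otherwise, we have $w_{D_n}(2) = w_{C_n}(2) + n$ and $w_{D_n}(1) = w_{C_n}(1) = 1$.

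First I would split into the two cases by the parity of $a$. Using the formula
\begin{equation*}
N(a, D_n) = \sum_{\substack{d \mid \rho}} \frac{w_{D_n}(d)}{\ord_d(a)},
\end{equation*}
where $\rho$ is the largest divisor of $|D_n| = 2n$ coprime to $a$, I would compare term by term with the corresponding sum for $C_n$. Since $w_{D_n}(d) = w_{C_n}(d)$ for all $d \neq 2$ (and for $d=1$), the two sums agree except possibly at $d = 2$. If $a$ is even, then $2 \nmid \rho$, so the divisor $d = 2$ never appears in either sum; hence every surviving term is identical and $N(a, D_n) = N(a, C_n)$. If $a$ is odd, then $2 \mid \rho$ (as $2$ is coprime to the odd $a$), so the $d = 2$ term is present, and $\ord_2(a) = 1$ because any odd $a$ satisfies $a \equiv 1 \pmod 2$. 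The extra $n$ reflections therefore each contribute $n/\ord_2(a) = n$ beyond the $C_n$ count, giving $N(a, D_n) = N(a, C_n) + n$.

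The one subtlety to handle carefully is the claim $w_{D_n}(2) = w_{C_n}(2) + n$ independent of the parity of $n$: when $n$ is odd, $C_n$ has no order-$2$ rotations so $w_{C_n}(2) = 0$, yet $D_n$ still has $n$ reflections of order $2$; when $n$ is even, $C_n$ contributes one order-$2$ rotation and $D_n$ adds the $n$ reflections, and all of these are genuinely distinct elements of $D_n$. In either case the net surplus over $C_n$ coming from order-$2$ elements is exactly $n$, and since in the odd-$a$ case $\ord_2(a) = 1$ this surplus passes undivided into $N(a, D_n)$. The main (and only real) obstacle is thus bookkeeping the order statistics of $D_n$ correctly and confirming that the divisor $d=2$ enters the sum precisely when $a$ is odd; once that is verified, both identities follow by direct term-by-term comparison with the cyclic case.
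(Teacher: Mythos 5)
Your proof is correct, but it takes a genuinely different route from the paper's. The paper argues directly on the graph $G(a, D_n)$: every reflection $x$ has order $2$, so if $a$ is even then $x^a = e$ and all $n$ reflections attach to the component of the identity, creating no new components, while if $a$ is odd then $x^a = x$, so each reflection is a loop forming its own singleton component; the two identities follow with no computation. You instead apply Lemma~\ref{formula}, reducing the statement to bookkeeping of the order statistics: $w_{D_n}(d) = w_{C_n}(d)$ for $d \neq 2$, $w_{D_n}(2) = w_{C_n}(2) + n$, and the divisor $2$ enters the sum precisely when $a$ is odd, contributing $n/\ord_2(a) = n$. This is sound, with one point you treat only implicitly: the two sums range over divisors of different moduli, since $\rho_{D_n}$ is the largest divisor of $2n$ coprime to $a$ while $\rho_{C_n}$ divides $n$ (for odd $a$ one has $\rho_{D_n} = 2\rho_{C_n}$), so the term-by-term comparison also needs the observation that any $d \mid 2n$ with $d \nmid n$ and $d \neq 2$ satisfies $w_{D_n}(d) = 0$, because element orders in $D_n$ either divide $n$ or equal $2$; your phrase ``the two sums agree except possibly at $d = 2$'' presupposes exactly this. (Also, the $\epsilon$ you introduce is never used.) The trade-off between the two approaches: the paper's structural argument is shorter and additionally reveals the shape of the components (reflections as pendant vertices on the identity component, or as loops), whereas your formula-based computation is the same method the paper deploys later for $SL_2(\mathbb{F}_q)$ and generalizes mechanically to any group whose order statistics are known.
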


\begin{proof}
Recall that $D_n$ consists of a cyclic subgroup of order $n$ and $n$ elements of order $2$ lying outside this cyclic subgroup. If $a$ is even, then each element of order $2$ is connected to the component that contains the identity. If $a$ is odd, then each element of order $2$ lies in a component that consists of a single vertex with a loop.
\end{proof}

\begin{proof}[Proof of theorem \ref{normc_n}]
We use the strategy of Pomerance and Shparlinski in \cite{Pomerance2017}. First we recall a result of Baker and Harman.

\begin{lemma}[{\cite{Baker1998}, Theorem 1}] \label{shifted_smooth}
There is an absolute constant $\kappa$ with the following property: Let $x$ sufficiently large, and let $$v = \frac{\log x}{\log \log x}, \quad w = v^{1/0.2961}.$$ Let $$\mathcal{Q} = \{p \in \left [\frac{w}{(\log w)^{\kappa}}, w \right] \enskip : \enskip p - 1 \mid M_v\},$$ where $M_v$ is the least common multiple of the integers in $[1, v]$. Then $$\vert \mathcal{Q} \vert \ge \frac{w}{(\log w)^\kappa}.$$
\end{lemma}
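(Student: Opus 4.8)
The statement is, after a harmless reformulation, Theorem~1 of Baker and Harman on shifted primes free of large prime factors, so the plan is first to reduce the claim to a clean smooth-shifted-prime count and then to recall the sieve architecture that delivers it. Write $P(m)$ for the largest prime factor of a positive integer $m$. The condition $p - 1 \mid M_v$ is equivalent to demanding that every prime power $\ell^k$ exactly dividing $p-1$ satisfy $\ell^k \le v$; this forces $P(p-1) \le v$, and it differs from the bare smoothness requirement $P(p-1) \le v$ only by discarding those $p$ for which some prime $\ell \le v$ divides $p - 1$ to a power exceeding $\lfloor \log_\ell v\rfloor$. The number of such exceptional $p \le w$ is $O(w v^{-1/2})$ by a trivial upper-bound sieve, hence negligible. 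Moreover, since $v = w^{\delta}$ with $\delta = 0.2961$ and there are at most $w/(\log w)^{\kappa}$ primes below $w/(\log w)^{\kappa}$, truncating the interval from the left is free once one controls the full count up to $w$. Thus it suffices to establish $\#\{p \le w : P(p-1) \le w^{\delta}\} \gg w/\log w$ and then absorb the implied constant, the prime-power correction, and the truncation into a sufficiently large absolute $\kappa$.

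To obtain this lower bound I would run a lower-bound combinatorial sieve on the sequence $\mathcal{A} = \{p - 1 : p \le w \text{ prime}\}$, sieving out the elements divisible by a prime in $(w^{\delta}, w]$; after a Buchstab decomposition to account for elements carrying one or two large prime factors, the sifted quantity is exactly $\#\{p \le w : P(p-1)\le w^{\delta}\}$. The anticipated main term is $\pi(w)$ times a positive Dickman-type smooth-number density (of order a constant since $1/\delta \approx 3.38$), arising through the $\varphi$-weighting of the progressions. The error terms collapse to the distribution of $\mathcal{A}$ in arithmetic progressions, namely $|\mathcal{A}_d| = \pi(w; d, 1)$. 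Bombieri--Vinogradov gives equidistribution to level $w^{1/2 - o(1)}$, which by itself only reaches $\delta$ near $1/2$; to descend to $0.2961$ one must exploit the distribution of primes in progressions to moduli beyond $w^{1/2}$ on average. This is the core of Harman's sieve: decompose the characteristic function of the sifted set via Heath-Brown or Vaughan identities into Type~I sums $\sum_m a_m\, \pi(w/m;\, d,\, 1)$ and Type~II bilinear sums $\sum_{m\sim M}\sum_{n\sim N} a_m b_n (\cdots)$, and bound the latter using the dispersion method together with nontrivial cancellation in Kloosterman sums (Weil and Deshouillers--Iwaniec bounds), which yields an effective level of distribution past $w^{1/2}$ for precisely these bilinear forms.

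The argument then closes by a numerical optimization of the Buchstab iteration against the ranges in which the Type~I and Type~II estimates remain valid; the constant $0.2961$ is the exact output of this optimization. The main obstacle is without doubt the Type~II bilinear estimates: forcing the shifted primes to equidistribute to moduli exceeding $w^{1/2}$ is what compels the dispersion method and the Kloosterman-sum input, and the interplay of the admissible ranges inside the Buchstab recursion is what both produces and caps the exponent. By comparison, the reduction in the first paragraph and the verification that the left-truncation and the prime-power condition cost only a fixed power of $\log w$ are routine, and I would carry those out in full while invoking Baker--Harman for the sieve estimates proper.
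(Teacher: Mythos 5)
The paper gives no internal proof of this lemma---it is imported as a black box from Baker--Harman's Theorem~1 (the count of $p\le x$ with $P(p-1)\le x^{0.2961}$ being $\gg \pi(x)$)---and your proposal does essentially the same thing: you invoke Baker--Harman for the hard sieve content and only carry out the routine bridge, so the approaches coincide. Your bridging steps are all correct: the equivalence of $p-1 \mid M_v$ with the condition that every prime power dividing $p-1$ is at most $v$, the $O\left(w v^{-1/2}\right)$ bound on primes where some $\ell \le v$ divides $p-1$ to an inadmissibly high power, and the observation that the left truncation and all error terms are absorbed by taking $\kappa$ large, since $w/(\log w)^{\kappa} + O\left(w v^{-1/2}\right)$ is negligible against $c\,w/\log w$ once $\kappa \ge 2$.
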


\noindent Now we prove the result. Let $\mathcal{Q}$ be the set of primes given by lemma \ref{shifted_smooth}. Let $$k = \left \lfloor \frac{\log x}{\log w} \right \rfloor.$$ Let $\mathcal{S}$ denote the set of products of $k$ distinct elements of $\mathcal{Q}$. We see that $$\vert \mathcal{S} \vert = \binom{\vert \mathcal{Q} \vert}{k} = \left( \frac{w}{k} \right)^k x^{o(1)},$$ using that $(n/k)^k \le \binom{n}{k} \le (ne/k)^k$.  We compute that $k^k = x^{0.2961 + o(1)}$ and $w^k = x^{1 + o(1)}$, so
$$\vert \mathcal{S} \vert = x^{1 - 0.2961 + o(1)}.$$
We also note that, for any $m \in \mathcal{S}$, $$x \ge w^{k} \ge m \ge (w/(\log w)^{\kappa})^k = x^{1 + o(1)}.$$
By lemma \ref{ord}, we have that for any $m \in \mathcal{S}$, $\ord_m(a) \mid M_v$. By the prime number theorem, this implies that $$\ord_m(a) \le M_v = \exp(v(1 + o(1))) = x^{o(1)}.$$
Therefore, for each $m \in \mathcal{S}$, we have $$N(a, C_m) \ge \frac{\varphi(m)}{\ord_m(a)} = x^{1 + o(1)},$$ since $\varphi(m) = m^{1 + o(1)} = x^{1 + o(1)}$ \cite[Theorem 328]{Hardy}. Therefore we have found $x^{1 - 0.2961 + o(1)}$ positive integers $m$ less than or equal to $x$ such that $N(a, C_m) = x^{1 + o(1)}$,  which implies the result.

\begin{remark}
One can obtain the same result by using the work of Ambrose; it follows from the specialization to $\mathbb{Q}$ of { \cite[Theorem 1]{Ambrose2014}}. As we can remove $\delta$ from the result of Ambrose under the Elliott-Halberstam conjecture, we can show that the average value of $N(a, C_n)$ is $x^{1 + o(1)}$ under the Elliott-Halberstam conjecture. 
\end{remark}
\end{proof}

\begin{proof}[Proof of theorem \ref{cyclicextremal}]
Let $k$ be a large integer, and set $n = a^k - 1$. Then, using \cite[Theorem 328]{Hardy},
\begin{equation*} N(a, C_n) \ge \frac{\varphi(a^k -1)}{k} \gg \frac{n}{\log n \log \log n}.
\end{equation*}
\end{proof}

\noindent Before proving theorem \ref{cyclicnormal}, we recall some properties of the Carmichael lambda function.

\begin{lemma}[{\cite[Lemma 2]{Friedlander2000}}] \label{lambdabound}
If $d \vert n$, then $$\varphi(d)/\lambda(d) \vert \varphi(n)/\lambda(n).$$
\end{lemma}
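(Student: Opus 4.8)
The plan is to reduce to the case where $n/d$ is prime and then analyze how the ratio $f(m) := \varphi(m)/\lambda(m)$ behaves under multiplication by a single prime. Since divisibility is transitive, it suffices to build a chain $d = d_0 \mid d_1 \mid \cdots \mid d_r = n$ in which each $d_{i+1}/d_i$ is prime and to verify $f(d_i) \mid f(d_{i+1})$ at every step; chaining these then gives $f(d) \mid f(n)$. So I would fix a prime $p$ and compare $f(d)$ with $f(dp)$. Throughout I use the standard facts that $\varphi$ is multiplicative, that $\lambda(m) = \operatorname{lcm}_{\ell^k \| m}\lambda(\ell^k)$, and that $d \mid n$ forces $\lambda(d) \mid \lambda(n)$ (because $(\Z/d\Z)^*$ is a quotient of $(\Z/n\Z)^*$, so its exponent divides the exponent of $(\Z/n\Z)^*$).

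First I would treat the coprime case $p \nmid d$. By the Chinese Remainder Theorem $(\Z/dp\Z)^* \cong (\Z/d\Z)^* \times C_{p-1}$, so $\varphi(dp) = (p-1)\varphi(d)$ and $\lambda(dp) = \operatorname{lcm}(\lambda(d), p-1)$. Writing the lcm as $\lambda(d)(p-1)/\gcd(\lambda(d), p-1)$ yields the clean identity $f(dp) = f(d)\gcd(\lambda(d), p-1)$, which is manifestly a multiple of $f(d)$. (For $p=2$ this reads $f(2d) = f(d)$, consistent with $C_{p-1}$ being trivial.)

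The main work is the case $p \mid d$. Write $d = p^k m$ with $p \nmid m$ and $k \ge 1$, so $n = p^{k+1}m$. Then $\varphi(n)/\varphi(d) = \varphi(p^{k+1})/\varphi(p^k) = p$, hence $f(n)/f(d) = p\,\lambda(d)/\lambda(n)$, and since $\lambda(d)\mid\lambda(n)$ the claim $f(d) \mid f(n)$ is equivalent to $\lambda(n)/\lambda(d) \mid p$. The obstacle here is that $\lambda$ is not multiplicative across the prime decomposition, owing to the lcm. To get around this I would argue prime-by-prime on the $\ell$-adic valuations $v_\ell$. Since $\lambda(n)$ and $\lambda(d)$ have identical prime-power contributions away from $p$, we get $v_\ell(\lambda(n)) = v_\ell(\lambda(d))$ for $\ell \ne p$, while at $\ell = p$ one has $v_p(\lambda(n)) - v_p(\lambda(d)) = \max\bigl(v_p(\lambda(p^{k+1})), v_p(\lambda(m))\bigr) - \max\bigl(v_p(\lambda(p^k)), v_p(\lambda(m))\bigr)$. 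Because $\lambda(p^{k+1})/\lambda(p^k)$ divides $p$ in every case (it equals $p$ for odd $p$ and lies in $\{1,2\}$ for $p=2$), this difference is at most $1$, so $\lambda(n)/\lambda(d) \in \{1, p\}$ and divides $p$, completing the step.

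I expect the prime-power case to be the only genuine obstacle. The presence of $\lambda(m)$ inside the lcm means $v_p(\lambda(n)/\lambda(d))$ is $0$ when $v_p(\lambda(m)) \ge v_p(\lambda(p^{k+1}))$ and $1$ otherwise, so I would record carefully that in both outcomes $\lambda(n)/\lambda(d) \in \{1, p\}$, which is exactly the divisibility needed to conclude $f(d) \mid f(n)$.
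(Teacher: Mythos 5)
Your proof is correct, but there is nothing in the paper to compare it against: the paper does not prove this lemma at all, it imports it as a black box from \cite[Lemma 2]{Friedlander2000}. So your prime-by-prime induction is a self-contained substitute for the citation, and it checks out. The reduction to prime steps is legitimate since divisibility of the integer-valued quantity $f(m)=\varphi(m)/\lambda(m)$ (integer-valued because $\lambda(m)\mid\varphi(m)$) is transitive along a chain $d=d_0\mid d_1\mid\cdots\mid d_r=n$ with prime quotients. In the coprime step, CRT indeed gives $\varphi(dp)=(p-1)\varphi(d)$ and $\lambda(dp)=\operatorname{lcm}(\lambda(d),p-1)$, so $f(dp)=f(d)\gcd(\lambda(d),p-1)$. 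In the step $p\mid d$, writing $d=p^km$ with $p\nmid m$, your valuation argument is the right way to handle the non-multiplicativity of $\lambda$: for $\ell\neq p$ one has $v_\ell(\lambda(p^{k+1}))=v_\ell(\lambda(p^k))$ (both equal $v_\ell(p-1)$ for odd $p$, and both vanish for $p=2$), so $v_\ell(\lambda(dp))=v_\ell(\lambda(d))$; and at $\ell=p$ the difference of maxima is in $\{0,1\}$ because $v_p(\lambda(p^{k+1}))-v_p(\lambda(p^k))\in\{0,1\}$ (always $1$ for odd $p$; for $p=2$ it is $0$ exactly when $k=2$, which you correctly flag via $\lambda(8)/\lambda(4)=1$). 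Hence $\lambda(dp)/\lambda(d)\in\{1,p\}$ and $f(dp)/f(d)=p\,\lambda(d)/\lambda(dp)$ is an integer, completing the step. The only stylistic remark is that the argument is phrased as a plan (``I would\dots''), but every assertion in it is actually verified, so it stands as a complete elementary proof that the paper itself does not supply.
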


\begin{theorem}[{\cite[Theorem 2]{Erdos1991}}]\label{lambda} For almost all $n$, 
$$\lambda(n) = n^{1 + o(1)}.$$
\end{theorem}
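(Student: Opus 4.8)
The statement is the normal order of Carmichael's function; \cite{Erdos1991} in fact proves the much sharper $\lambda(n) = n/(\log n)^{\log\log\log n + A + o(1)}$ for a constant $A$, but I only need the crude form as worded, and I would prove it directly. Since $\lambda(n) \le \varphi(n) \le n$, the upper bound $\lambda(n) \le n^{1+o(1)}$ is immediate, so the whole content is the lower bound: $\lambda(n) \ge n^{1-o(1)}$ for almost all $n$. Equivalently I must show $\log \lambda(n) = (1-o(1))\log n$ on a density-$1$ set, and it suffices to prove that for each fixed $\varepsilon > 0$ the set of $n \le x$ with $\lambda(n) < n^{1-\varepsilon}$ has size $o(x)$, then let $\varepsilon \to 0$ along a sequence.

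For the lower bound I use only that $\lambda(n)$ is divisible by $p - 1$ for every odd prime $p \mid n$, so $\lambda(n) \ge L(n) := \mathrm{lcm}\{\,p-1 : p \mid n,\ p \text{ odd}\,\}$. First I reduce to the radical: for almost all $n$ the squarefull part of $n$ is $n^{o(1)}$, so $\log \mathrm{rad}(n) = (1-o(1))\log n$, and since $\sum_{p \mid n}\log\frac{p}{p-1} \le \sum_{p \mid n}\frac1{p-1} = O(\log\log n)$, I get $\log \prod_{p \mid n}(p-1) = \log\mathrm{rad}(n) + O(\log\log n) = (1-o(1))\log n$ for almost all $n$. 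Hence it remains to show that the collision loss
\[
\Delta(n) := \log\prod_{p \mid n}(p-1) - \log L(n)
\]
satisfies $\Delta(n) = o(\log n)$ for almost all $n$.

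The point is that $\Delta(n)$ is small on average. Writing the exponent of a prime $\ell$ in $\prod_{p\mid n}(p-1)$ as $\sum_{j \ge 1}\#\{p \mid n : \ell^j \mid p-1\}$ and in $L(n)$ as $\max_{p \mid n} v_\ell(p-1)$, one obtains the identity
\[
\Delta(n) = \sum_{\ell}\sum_{j \ge 1}\log\ell\,\bigl(\#\{p \mid n : \ell^j \mid p-1\} - 1\bigr)^{+}.
\]
Bounding $(m-1)^+ \le \binom{m}{2}$ by the number of unordered pairs and using $\mathrm{Prob}(p_1 p_2 \mid n) \le 1/(p_1 p_2)$ for uniform $n \le x$, the expectation of $\Delta(n)$ is at most
\[
\frac12\sum_{\ell}\sum_{j \ge 1}\log\ell\Bigl(\sum_{\substack{p \le x\\ p \equiv 1\,(\mathrm{mod}\ \ell^j)}}\frac1p\Bigr)^{2}.
\]
By Brun–Titchmarsh (or Mertens in arithmetic progressions) the inner sum is $\ll (\log\log x)/\ell^j$ uniformly in $\ell^j$ up to a negligible secondary term, and since $\sum_\ell \log\ell \sum_{j\ge1}\ell^{-2j} = \sum_\ell \frac{\log\ell}{\ell^2-1} < \infty$, the expected collision loss is $O((\log\log x)^2)$. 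Markov's inequality then shows $\Delta(n) \le (\log\log x)^{3}$, say, for all but $o(x)$ of the $n \le x$, and $(\log\log x)^3 = o(\log x)$.

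Combining the steps, for almost all $n$ we have $\log\lambda(n) \ge \log L(n) = \log\prod_{p\mid n}(p-1) - \Delta(n) = (1-o(1))\log n$, together with $\log\lambda(n) \le \log n$, i.e.\ $\lambda(n) = n^{1+o(1)}$. The only genuinely delicate step is the uniformity in the modulus $\ell^j$ of the estimate for $\sum_{p \le x,\ p \equiv 1}1/p$, which is exactly what Brun–Titchmarsh supplies; the remaining ingredients — the density-$1$ reductions controlling the squarefull part and $\omega(n) = O(\log\log n)$, and the second-moment/Markov argument — are routine.
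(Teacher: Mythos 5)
Your proof is correct, but it takes a genuinely different route from the paper, which in fact offers no proof at all: the statement is imported verbatim as Theorem 2 of \cite{Erdos1991}, where (as you correctly note) the much sharper normal order $\lambda(n)=n/(\log n)^{\log\log\log n+A+o(1)}$ is established. Your argument replaces that citation with a self-contained moment computation, and every step checks out: the lower bound $\lambda(n)\ge \mathrm{lcm}\{\,p-1 : p\mid n,\ p\ \text{odd}\,\}$ is valid since $p-1\mid\lambda(p^e)\mid\lambda(n)$ for odd $p$; the density-one reduction to the radical via the squarefull part (of size $O(x/\sqrt{T})$ exceptions for squarefull part exceeding $T$) together with $\sum_{p\mid n}\log\tfrac{p}{p-1}=O(\log\log n)$ is sound; the valuation identity for $\Delta(n)$ is right, because $v_\ell(L(n))=\sum_{j\ge1} 1_{\{\exists\, p\mid n:\ \ell^j\mid p-1\}}$ and $v_\ell\bigl(\prod_{p\mid n}(p-1)\bigr)=\sum_{j\ge 1}\#\{p\mid n:\ell^j\mid p-1\}$; the bound $(m-1)^+\le\binom{m}{2}$ converts $\Delta$ into a sum over pairs of prime divisors; and Brun--Titchmarsh with partial summation does give $\sum_{p\le x,\ p\equiv 1 \ (\mathrm{mod}\ \ell^j)}1/p\ll(\log\log x)/\varphi(\ell^j)$ uniformly (the sum is empty for $\ell^j>x$, so the series $\sum_\ell(\log\ell)\sum_j \ell^{-2j}\ll\sum_\ell(\log\ell)/(\ell^2-1)<\infty$ closes the expectation bound $E[\Delta]\ll(\log\log x)^2$, after which Markov and the standard $\varepsilon\to 0$ diagonalization finish). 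The comparison is this: the citation buys precision --- the exact normal order of $\lambda$ --- none of which the paper uses, since the proof of Theorem \ref{cyclicnormal} invokes only $\lambda(n)=n^{1+o(1)}$; your argument buys elementarity and self-containment, needing only Brun--Titchmarsh (or Mertens in progressions) rather than the heavier analysis of \cite{Erdos1991}, at the cost of saying nothing about the rate of the $o(1)$.
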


\begin{lemma}[{\cite[Lemma 5]{Kurlberg2005}}]\label{ord_bound} We have
$$\ord_{n}(a) \ge \frac{\lambda(n)}{n} \prod_{p \vert n} \ord_p(a).$$
\end{lemma}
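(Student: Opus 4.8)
The plan is to induct on the number of distinct primes dividing $n$, where throughout we assume $\gcd(a,n) = 1$ so that all the orders appearing below are defined. The engine of the argument is the Chinese Remainder Theorem, which gives the factorization $\ord_n(a) = \operatorname{lcm}_{p^e \| n} \ord_{p^e}(a)$, together with the parallel factorizations $\lambda(n) = \operatorname{lcm}_{p^e \| n} \lambda(p^e)$ and $n = \prod_{p^e \| n} p^e$, and the elementary identity $\operatorname{lcm}(x,y) = xy/\gcd(x,y)$. The content of the lemma is that passing from a product of orders to their least common multiple does not lose more than the factor $n/\lambda(n)$, so the heart of the proof is a bound on the redundancy in these lcm's.

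I would first dispose of the base case, a single prime power $p^e$, by showing $\ord_{p^e}(a) \ge \frac{\lambda(p^e)}{p^e}\ord_p(a)$. For odd $p$ this is immediate, since $\lambda(p^e)/p^e = (p-1)/p < 1$ while $\ord_{p^e}(a) \ge \ord_p(a)$; for $p = 2$ it is weaker still, as $\ord_2(a) = 1$ and $\lambda(2^e)/2^e \le 1/2$. For the inductive step, write $n = m\,p^e$ with $p \nmid m$ and $p^e \| n$. Applying the induction hypothesis to $m$ and the base estimate to $p^e$, and using $mp^e = n$, one gets
$$\ord_n(a) = \frac{\ord_m(a)\,\ord_{p^e}(a)}{\gcd(\ord_m(a),\ord_{p^e}(a))} \ge \frac{\lambda(m)\,\lambda(p^e)}{n\,\gcd(\ord_m(a),\ord_{p^e}(a))}\prod_{q \mid n}\ord_q(a),$$
since the primes dividing $n$ are those dividing $m$ together with $p$. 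Because $\lambda(n) = \operatorname{lcm}(\lambda(m),\lambda(p^e)) = \lambda(m)\lambda(p^e)/\gcd(\lambda(m),\lambda(p^e))$, comparing this lower bound with the target $\frac{\lambda(n)}{n}\prod_{q \mid n}\ord_q(a)$ reduces the inductive step to the single inequality $\gcd(\ord_m(a),\ord_{p^e}(a)) \le \gcd(\lambda(m),\lambda(p^e))$.

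The step I expect to be the crux is precisely this control of the $\gcd$, which measures the redundancy lost in forming $\operatorname{lcm}(\ord_m(a),\ord_{p^e}(a))$. It is resolved cleanly by the divisibilities $\ord_m(a) \mid \lambda(m)$ and $\ord_{p^e}(a) \mid \lambda(p^e) = \varphi(p^e)$, which follow from Euler's theorem applied to each modulus; these give $\gcd(\ord_m(a),\ord_{p^e}(a)) \mid \gcd(\lambda(m),\lambda(p^e))$ and hence the required inequality. Substituting this back completes the induction. I would remark that the factor $\lambda(n)/n$ is genuinely needed: using only $\ord_n(a) \ge \operatorname{lcm}_{p \mid n}\ord_p(a)$ and bounding the redundancy of the $\ord_p(a)$ by that of the $p-1$ is not enough, which is why the prime-power exponents (through the base estimate and through $\lambda(p^e) = \varphi(p^e)$) must enter.
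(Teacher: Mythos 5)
The paper gives no proof of this lemma at all: it is imported verbatim from Kurlberg--Pomerance (their Lemma 5), so there is no internal argument to compare yours against. Your proof is correct and self-contained, which makes it a genuine addition rather than a variant. The structure checks out at every step: the CRT factorization $\ord_n(a)=\operatorname{lcm}(\ord_m(a),\ord_{p^e}(a))=\ord_m(a)\ord_{p^e}(a)/\gcd(\ord_m(a),\ord_{p^e}(a))$, the base estimate $\ord_{p^e}(a)\ge \frac{\lambda(p^e)}{p^e}\ord_p(a)$ (using $\ord_p(a)\mid\ord_{p^e}(a)$ for odd $p$, and $\ord_2(a)=1$ for $p=2$), and the reduction of the inductive step to $\gcd(\ord_m(a),\ord_{p^e}(a))\le\gcd(\lambda(m),\lambda(p^e))$, which indeed follows from the divisibilities $\ord_m(a)\mid\lambda(m)$ and $\ord_{p^e}(a)\mid\lambda(p^e)$ together with $\lambda(n)=\operatorname{lcm}(\lambda(m),\lambda(p^e))$. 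This is in essence an inductive, two-factor-at-a-time repackaging of the lcm-versus-product bookkeeping that the cited source does in one shot over all prime powers simultaneously; the inductive version has the advantage that the only redundancy one ever has to control is a single pairwise $\gcd$. Two small corrections to your justifications, neither of which affects validity. First, you write $\ord_{p^e}(a)\mid\lambda(p^e)=\varphi(p^e)$; the equality $\lambda(p^e)=\varphi(p^e)$ fails for $p=2$, $e\ge 3$, where $\lambda(2^e)=2^{e-2}$. The divisibility you actually use, $\ord_{p^e}(a)\mid\lambda(p^e)$, is still true -- it is immediate from the paper's definition of $\lambda$ as the exponent of $(\Z/p^e\Z)^*$ -- so nothing breaks, but the parenthetical equality should be dropped. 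Second, for the same reason the appeal to ``Euler's theorem'' is misplaced: Euler only gives $\ord_m(a)\mid\varphi(m)$, which is too weak to conclude $\gcd(\ord_m(a),\ord_{p^e}(a))\mid\gcd(\lambda(m),\lambda(p^e))$; what you need is $\ord_m(a)\mid\lambda(m)$, which holds by the definition of the Carmichael function (equivalently, Carmichael's theorem), not Euler's.
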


\noindent Let $B$ denote the set of primes such that $\ord_p(a) < \sqrt{p}/\log p$.

\begin{lemma}[\cite{Erdos1999}]\label{density} With $B$ defined as above,   $\vert B \cap \{1, \dotsc, N \} \vert = O(n/ (\log n)^3)$.
\end{lemma}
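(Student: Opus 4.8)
The plan is to prove Lemma~\ref{density} by an elementary counting argument, grouping the primes in $B \cap \{1, \dots, N\}$ according to the value $t = \ord_p(a)$ of their multiplicative order. (Here I tacitly discard the $O(\log a) = O(1)$ primes dividing $a$, for which the order is not defined.) The condition $t < \sqrt{p}/\log p$ defining $B$, combined with $p \le N$, will be used only to confine $t$ to a short range. Since $x \mapsto \sqrt{x}/\log x$ is increasing for $x > e^2$, every $p \in B$ with $p \le N$ satisfies
$$
\ord_p(a) < \frac{\sqrt{p}}{\log p} \le \frac{\sqrt{N}}{\log N} =: T,
$$
the finitely many primes below $e^2$ contributing only $O(1)$. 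Hence $B \cap \{1, \dots, N\} \subseteq \{p : \ord_p(a) \le T\}$, and it suffices to bound this superset by summing over the admissible orders $t \le T$.

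The key input is a bound on the number of primes having a prescribed order $t$. If $\ord_p(a) = t$ then $t \mid p - 1$, so $p > t$, and also $p \mid a^t - 1$. Thus the primes of order exactly $t$ are distinct divisors of $a^t - 1$, each exceeding $t$; if there are $k$ of them, their product exceeds $t^k$ while dividing $a^t - 1$, so $t^k < a^t$ and therefore
$$
\#\{p : \ord_p(a) = t\} \le \frac{t \log a}{\log t} \qquad (t \ge 2),
$$
with the case $t = 1$ accounting for at most the $O(\log a)$ prime factors of $a - 1$.

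Summing over $t \le T$ and invoking the elementary estimate $\sum_{2 \le t \le T} \frac{t}{\log t} \sim \frac{T^2}{2 \log T}$ (by comparison with $\int_2^T \frac{s}{\log s}\, ds$) then yields
$$
\bigl| B \cap \{1, \dots, N\} \bigr| \ll_a \frac{T^2}{\log T}.
$$
Substituting $T = \sqrt{N}/\log N$, one has $T^2 = N/(\log N)^2$ and $\log T \sim \tfrac{1}{2}\log N$, so the right-hand side is $\ll_a N/(\log N)^3$, which is the asserted bound (with an implied constant that may depend on the fixed integer $a$).

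I expect the only point requiring real care to be the bookkeeping of the logarithmic factors. The three powers of $\log N$ in the denominator are produced as two powers from $T^2 = N/(\log N)^2$ and one further power from $1/\log T \sim 2/\log N$; it is precisely the choice $T = \sqrt{N}/\log N$, rather than $T = \sqrt{N}$, that converts the trivial $O(N/\log N)$ estimate into the claimed $O(N/(\log N)^3)$. Neither of the two substantive ingredients, the monotonicity reduction to $t \le T$ and the per-order prime count $\#\{p : \ord_p(a) = t\} \le t \log a/\log t$, is deep, so the main work is in assembling them with the correct constants.
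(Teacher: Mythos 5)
Your proposal is correct. Note first that the paper itself offers no proof of Lemma~\ref{density}: it is stated as a citation to Erd\H{o}s--Murty \cite{Erdos1999}, whose underlying argument is exactly the elementary counting you have reconstructed. Each step of your write-up checks out: the monotonicity of $x \mapsto \sqrt{x}/\log x$ for $x > e^2$ legitimately reduces the problem to counting primes with $\ord_p(a) \le T = \sqrt{N}/\log N$; the per-order bound is sound, since a prime with $\ord_p(a) = t$ satisfies both $p \mid a^t - 1$ and $t \mid p-1$ (so $p > t$), whence $k$ such primes force $t^k < a^t$ and $k \le t\log a/\log t$; and the summation $\sum_{2 \le t \le T} t/\log t \sim T^2/(2\log T)$ combined with $\log T \sim \tfrac12 \log N$ delivers $\ll_a N/(\log N)^3$. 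Your closing observation is also the right one to emphasize: the extra logarithm in the definition of $B$ (i.e.\ taking $T = \sqrt{N}/\log N$ rather than $\sqrt{N}$) is precisely what upgrades the exponent from $(\log N)^{-1}$ to $(\log N)^{-3}$, and it is this strength that the paper needs in Lemma~\ref{bad}, where the convergence of $\sum_{p \in B} 1/p$ (by partial summation against the $O(N/(\log N)^3)$ count) is what makes the Euler product $\prod_{p \in B}(1-1/p)^{-1}$ finite. Two cosmetic points: the lemma's statement mixes $N$ and $n$ (clearly a typo, which you silently and reasonably read as the same variable), and your implied constant depends on $a$, which is consistent with the paper's convention that $a$ is fixed throughout.
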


\begin{remark}
Using the results in \cite{Kurlberg2003}, we can show that the set of primes $p$ at most $n$  for with $\ord_p(a) \le p^{1 + o(1)}$ has size $O(n/(\log n)^3)$ under the generalized Riemann hypothesis, which would lead to a corresponding improvement in theorem \ref{cyclicnormal} to $N(a, C_n) \le n^{o(1)}$ for almost all $n$. 
\end{remark}
\noindent For an integer $n$, let $n_B$ denote the largest divisor of $n$ that is a product of primes from $B$. 
\begin{lemma} \label{bad}
For almost all $n \le N$, $n_B < \log n$.
\end{lemma}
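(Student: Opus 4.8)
The plan is to prove the bound via a first-moment computation for $\log n_B$ together with Markov's inequality. Since $n_B \ge 1$, the quantity $\log n_B$ is nonnegative, and the inequality $n_B < \log n$ is equivalent to $\log n_B < \log\log n$. Discarding the $O(\sqrt{N})$ integers $n \le \sqrt{N}$ (a negligible set), every remaining $n$ satisfies $\log\log n \ge \log\log N - \log 2$, so it suffices to show that $\log n_B < \log\log N - \log 2$ for all but $o(N)$ values of $n \le N$. I would establish this by first proving that $\sum_{n \le N} \log n_B = O(N)$ and then applying Markov's inequality, which bounds the number of exceptional $n$ by $O(N/\log\log N) = o(N)$.

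For the first moment, I would expand $\log n_B = \sum_{p \in B} v_p(n) \log p$, where $v_p(n)$ denotes the $p$-adic valuation of $n$, and interchange the order of summation:
\begin{equation*}
\sum_{n \le N} \log n_B = \sum_{p \in B} \log p \sum_{n \le N} v_p(n) = \sum_{p \in B} \log p \sum_{k \ge 1} \left\lfloor \frac{N}{p^k} \right\rfloor \le 2N \sum_{\substack{p \in B \\ p \le N}} \frac{\log p}{p},
\end{equation*}
using $\sum_{k \ge 1} \lfloor N/p^k \rfloor \le N/(p-1) \le 2N/p$. Thus everything reduces to showing that the prime sum $\sum_{p \in B,\, p \le N} (\log p)/p$ is bounded independently of $N$.

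This last bound is the crux of the argument, and it is exactly here that the saving of $(\log t)^3$ in lemma \ref{density} is used. Writing $B(t) = \vert B \cap \{1, \dots, t\}\vert = O(t/(\log t)^3)$ and applying Abel summation with the decreasing weight $f(t) = (\log t)/t$, the boundary term is $B(N)f(N) = O((\log N)^{-2})$, and, since $-f'(t) = (\log t - 1)/t^2 > 0$ for $t > e$, the integral term is
\begin{equation*}
\int_{e}^{N} B(t)\, \frac{\log t - 1}{t^2}\, dt \ll \int_{e}^{N} \frac{dt}{t (\log t)^{2}} = \int_{1}^{\log N} \frac{du}{u^{2}} < \infty.
\end{equation*}
Hence $\sum_{p \in B,\, p \le N} (\log p)/p = O(1)$, giving $\sum_{n \le N} \log n_B = O(N)$ and completing the proof. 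The main obstacle is precisely this convergence: any density bound $B(t) = O(t/(\log t)^{c})$ with $c > 2$ would suffice, since it produces the convergent integral $\int du/u^{c-1}$, whereas a weaker saving $c \le 2$ would not, so the full strength of lemma \ref{density} is essential to the argument.
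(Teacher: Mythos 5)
Your proof is correct, but it takes a genuinely different route from the paper's. The paper does not compute a first moment of $\log n_B$; instead it sums $1/m$ over all $B$-smooth integers $m$ (those with $m = m_B$), observes that this sum equals the Euler product $\prod_{p \in B}\left(1 - \frac{1}{p}\right)^{-1} = O(1)$ by lemma \ref{density}, and deduces that for any $\varepsilon > 0$ there is a constant $C(\varepsilon)$ with $\sum_{m = m_B,\, m > C} 1/m < \varepsilon$; since at most $N/m$ integers $n \le N$ have $n_B = m$, all but $\varepsilon N$ integers $n \le N$ satisfy $n_B < C$, which is eventually smaller than $\log n$. Two differences are worth noting. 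First, the paper's argument proves the stronger statement that $n_B$ is bounded by a constant outside a set of density zero, whereas your Markov step only yields $n_B < \log n$ itself; in exchange, your version comes with an explicit exceptional-set bound $O(N/\log\log N)$, while the paper's $\varepsilon$-argument gives no rate. Second, and more interestingly, the two arguments consume different amounts of lemma \ref{density}: your prime sum $\sum_{p \in B} (\log p)/p$ needs a saving of $(\log t)^c$ with $c > 2$, exactly as you observe, but the paper only needs convergence of $\sum_{p \in B} 1/p$ (hence of the Euler product), for which any $c > 1$ suffices. So your closing claim that the full strength of lemma \ref{density} is essential applies only to your route; the paper's route is more economical with the density hypothesis and still yields a stronger conclusion.
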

 
\begin{proof}
By the density estimate in lemma \ref{density}, we see that $$\sum_{n = n_B} \frac{1}{n} = \prod_{p \in B} \left(1 - \frac{1}{p} \right)^{-1} = O(1).$$
Therefore, for any $\varepsilon > 0$, there is $C = C(\varepsilon)$ such that $$\sum_{\substack{n = n_B, \\ n> C}} \frac{1}{n} < \varepsilon.$$ Thus for all but $\varepsilon N$ integers $n \le N$, we have that $n_B < C$. As $\varepsilon$ was arbitrary and eventually $\log n > C$, this proves the claim. 
\end{proof}

\begin{lemma}[{\cite[Lemma 7]{Kurlberg2005}}]\label{core}
The number of positive integers $n$ at most $x$ such that there is a positive integer $s$ such that $s^2 \mid n$ and $s^2 \ge \log n$ is $O\left( \frac{x}{\log x} \right)$.
\end{lemma}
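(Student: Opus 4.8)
The plan is to bound the number of such $n$ by a single union bound over the square divisors of $n$, indexed by $s$, playing the two constraints $s^2 \mid n$ and the largeness of the square divisor against each other so that both the small-$s$ head and the large-$s$ tail of the resulting sum come out as $O(x/\log x)$.

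By a union bound over the possible values of $s$, it suffices to estimate $\sum_s N_s$, where $N_s$ is the number of $n \le x$ with $s^2 \mid n$ that meet the largeness threshold. For a fixed $s$ the multiples of $s^2$ up to $x$ number at most $x/s^2$, so the crude estimate is $\sum_s x/s^2$; the entire difficulty is to truncate this sum at the correct scale, since summing $x/s^2$ from $s \asymp \sqrt{\log x}$ loses a full power of $\log x$.

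The decisive step is to read the largeness hypothesis not merely as a lower bound on the square divisor but as an upper bound on $n$: it forces $n \le \exp(s)$, so that $N_s \le \min(x,\exp(s))/s^2$. I would then split the sum at $s = \log x$. In the head $s \le \log x$ the terms $\exp(s)/s^2$ grow geometrically, so this range is dominated by its top term and contributes
$$
O\!\left(\frac{\exp(\log x)}{(\log x)^2}\right) = O\!\left(\frac{x}{(\log x)^2}\right).
$$
In the tail $s > \log x$ the terms revert to $x/s^2$, and
$$
\sum_{s > \log x} \frac{x}{s^2} \asymp x \int_{\log x}^{\infty} \frac{dt}{t^2} = \frac{x}{\log x}.
$$
Adding the two ranges yields the claimed bound $O(x/\log x)$.

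The main obstacle is precisely this calibration of the split. A union bound that retains only the divisibility $s^2 \mid n$ and discards the restriction on the size of $n$ truncates the tail far too early; what rescues the estimate is extracting the upper bound $n \le \exp(s)$ from the largeness condition, which pushes the effective cutoff out to $s \asymp \log x$, exactly where the geometrically dominated head and the convergent tail $\sum s^{-2}$ both balance at $x/\log x$. Everything else is routine once the threshold is seated there, and the only subtlety worth checking carefully is that the head is genuinely geometric (so that it does not accumulate across the $O(\log x)$ terms).
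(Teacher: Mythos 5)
Your decisive step is wrong, and the error is exactly at the point you flag as the crux. From the largeness hypothesis $s^2 \ge \log n$ you may conclude $n \le \exp(s^2)$, \emph{not} $n \le \exp(s)$: the inequality $\log n \le s^2$ exponentiates to $n \le e^{s^2}$. With the corrected bound $N_s \le \min(x, \exp(s^2))/s^2$, the crossover between the two regimes sits at $s \asymp \sqrt{\log x}$, not at $s \asymp \log x$. The head $s \le \sqrt{\log x}$ is indeed super-geometric (consecutive ratios $e^{2s-1}$) and contributes $O(x/\log x)$, but the tail now starts at $\sqrt{\log x}$ and contributes
$$\sum_{s > \sqrt{\log x}} \frac{x}{s^2} \asymp \frac{x}{\sqrt{\log x}},$$
so your method, repaired, proves only $O\bigl(x/\sqrt{\log x}\bigr)$. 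Your claimed $O(x/\log x)$ is an artifact of the miscopied exponent pushing the cutoff out to $s \asymp \log x$.

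Moreover, no repair can recover $O(x/\log x)$ for the statement as printed, because that statement is false: take primes $p \in (\sqrt{\log x},\, 2\sqrt{\log x}]$; every $n \le x$ with $p^2 \mid n$ qualifies (since $p^2 > \log x \ge \log n$), and an inclusion--exclusion count over the $\asymp \sqrt{\log x}/\log\log x$ such primes gives $\gg x/(\sqrt{\log x}\,\log\log x)$ qualifying integers, which exceeds $x/\log x$ by a factor $\sqrt{\log x}/\log\log x \to \infty$. So your corrected bound $O(x/\sqrt{\log x})$ is essentially optimal here. The paper gives no proof (the lemma is quoted from Kurlberg and Pomerance), and the resolution is that the cited lemma's hypothesis is $s \ge \log n$, i.e.\ $s^2 \ge (\log n)^2$: under that threshold your union-bound skeleton works verbatim, since for $n > \sqrt{x}$ one has $s \ge \tfrac12 \log x$ and $\sum_{s \ge \frac{1}{2}\log x} x/s^2 \ll x/\log x$, with the $n \le \sqrt{x}$ range contributing a trivial $O(\sqrt{x})$. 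Either version (the correct threshold with $O(x/\log x)$, or the printed threshold with $O(x/\sqrt{\log x})$) still yields a density-zero exceptional set, which is all that the application in Theorem 3.6 of the paper actually requires.
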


\begin{proof}[Proof of Theorem \ref{cyclicnormal}]
By lemma \ref{lambda}, lemma \ref{bad}, and lemma \ref{core} there is a set $S$ of density $1$ such that $n_B < \log n$, $s^2 < \log n$ for every $s$ such that $s^2$ divides $n$, and  $\lambda(n) = n^{1 + o(1)}$ for all $n \in S$. By lemma \ref{ord_bound}, we have that $$N(a, C_n) \le \sum_{d \vert n} \frac{d \varphi(d)}{\lambda(d) \prod_{p \vert d} \ord_p(a)}.$$

\noindent Using the bound that $\varphi(n) < n$ and lemma \ref{lambdabound}, in form of $\varphi(d)/\lambda(d) \le \varphi(n)/\lambda(n)$ for $d \vert n$, we have that, for almost all $n$, 
\begin{equation*} \begin{split}
N(a, C_n) & \le \sum_{d \vert n} \frac{d \varphi(d)}{\lambda(d) \prod_{p \vert d} \ord_p(a)} \\
& \le \sum_{d \vert n} \frac{d \varphi(n)}{\lambda(n) \prod_{p \vert d} \ord_p(a)} \\
& = \sum_{d \vert n} \frac{d n^{o(1)}}{\prod_{p \vert d} \ord_p(a)} \\
& \le n^{1/2 + o(1)},
\end{split} \end{equation*}
where in the last inequality we are using that the square part of $n$ is at most $\log n$ and the product of the primes in $B$ dividing $n$ is at most $\log n$. \end{proof}

\subsection{Symmetric groups}

\noindent As lemma \ref{sum} implies that the sequence $\{N(a, S_n) \}_{n \in \N}$ is non-decreasing, since $S_{n-1}$ embeds into $S_n$, it makes less sense to discuss the average order, normal order, and extremal order of $N(a, S_n)$. We therefore prove bounds on the size of $N(a, S_n)$.

\begin{theorem} \label{S_n} We have
$$N(a, S_n) \ge \frac{ n!}{\exp \left ( \frac{\varphi(a)}{2a} \log^2 n (1 + o(1)) \right)}.$$
\end{theorem}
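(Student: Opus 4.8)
The plan is to combine Lemma \ref{formula} with a first-moment estimate for the order of a random permutation all of whose cycle lengths are coprime to $a$. By the grouped form of Lemma \ref{formula} together with the trivial bound $\ord_{d}(a) \le \varphi(d) \le d$,
\[
N(a, S_n) = \sum_{\substack{g \in S_n \\ \gcd(|g|, a) = 1}} \frac{1}{\ord_{|g|}(a)} \ge \sum_{\substack{g \in S_n \\ \gcd(|g|, a) = 1}} \frac{1}{|g|} \ge \frac{1}{M}\,\bigl|\{g \in S_n : \gcd(|g|,a)=1,\ |g| \le M\}\bigr|
\]
for any threshold $M$, and I would take $M = \exp\!\left(\frac{\varphi(a)}{2a}\log^2 n\,(1+o(1))\right)$. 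The crucial point is that since the target bound divides $n!$ by exactly this $M$, I can afford to lose a factor $\exp(o(\log^2 n))$ in the count. In particular a crude Markov (first-moment) argument will suffice, and no concentration of $\log|g|$ around its mean is needed.

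For the ensemble, let $\sigma$ be uniform over the permutations of $[n]$ all of whose cycle lengths are coprime to $a$; these are exactly the $g$ with $\gcd(|g|,a)=1$. By the exponential formula their generating function is $F(x) = \exp\bigl(\sum_{\gcd(m,a)=1} x^m/m\bigr)$. Writing $\mathbf 1[\gcd(m,a)=1] = \sum_{e \mid \mathrm{rad}(a)} \mu(e)\,\mathbf 1[e \mid m]$ gives $\sum_{\gcd(m,a)=1} x^m/m = -\sum_{e \mid \mathrm{rad}(a)} \frac{\mu(e)}{e}\log(1-x^e)$, and as $\sum_{e \mid \mathrm{rad}(a)} \mu(e)/e = \varphi(a)/a$, the term $e=1$ dominates as $x \to 1^-$, so $F(x) \sim c\,(1-x)^{-\varphi(a)/a}$ while every other root of unity gives only a logarithmic singularity. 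Singularity analysis then yields $[x^k]F \sim c\,k^{\varphi(a)/a - 1}/\Gamma(\varphi(a)/a)$; in particular the number of these permutations is $\sim c'\, n!/n^{\,1-\varphi(a)/a} = n!/\mathrm{poly}(n)$, and the expected number of $m$-cycles is $\mathbb E[C_m] = \tfrac1m\,[x^{n-m}]F/[x^n]F \sim \tfrac1m (1-m/n)^{\varphi(a)/a - 1}$ for admissible $m$.

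For the first moment, since $|\sigma|$ is the lcm of the cycle lengths and the lcm divides their product, $\log|\sigma| \le \sum_m C_m \log m$, so
\[
\mathbb E[\log|\sigma|] \le \sum_{\substack{m \le n \\ \gcd(m,a)=1}} \frac{\log m}{m}\cdot\frac{[x^{n-m}]F}{[x^n]F}.
\]
The bulk $m \le n/\log n$, where $(1-m/n)^{\varphi(a)/a-1}=1+o(1)$, contributes $\sum_{\gcd(m,a)=1,\,m\le n/\log n}\frac{\log m}{m}(1+o(1)) \sim \frac{\varphi(a)}{2a}\log^2 n$ by the partial-summation estimate $\sum_{\gcd(m,a)=1,\,m\le X}\frac{\log m}{m} \sim \frac{\varphi(a)}{2a}\log^2 X$; here the factor $\varphi(a)/a$ is precisely the density of admissible cycle lengths. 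The range $n/\log n < m \le n$, including the large-cycle correction $(1-m/n)^{\varphi(a)/a-1}$ (which is integrable against $dm/m$ near $m=n$), contributes only $O(\log n \log\log n)$. Hence $\mathbb E[\log|\sigma|] \le \frac{\varphi(a)}{2a}\log^2 n\,(1+o(1))$. Applying Markov's inequality at threshold $(1+\varepsilon)\mathbb E[\log|\sigma|]$ with $\varepsilon = 1/\log\log n$ shows that a fraction at least $\frac{\varepsilon}{1+\varepsilon}$ of the ensemble has $|\sigma| \le M$; multiplying by the ensemble size $n!/\mathrm{poly}(n)$ and dividing by $M$ gives the claimed bound $N(a,S_n) \ge n!\exp\!\bigl(-\frac{\varphi(a)}{2a}\log^2 n(1+o(1))\bigr)$.

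The main obstacle is the analysis of the restricted ensemble: confirming that $x=1$ is the unique dominant singularity of $F$, so that singularity analysis applies and produces the exponent $\varphi(a)/a$, and then evaluating $\sum_m \frac{\log m}{m}\,[x^{n-m}]F/[x^n]F$ carefully enough that the admissible-density factor $\varphi(a)/a$ and the factor $\tfrac12$ combine to the exact constant $\frac{\varphi(a)}{2a}$ rather than $\tfrac12(\varphi(a)/a)^2$ or $\varphi(a)/a$. In my experience it is easy to misplace the factor $\varphi(a)/a$ by conflating $\mathbb E[C_m]\approx 1/m$ (correct here) with the Ewens-type $\theta/m$; verifying $\mathbb E[C_m]\approx 1/m$ via the coefficient ratio is what pins down the constant. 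Everything else — the reduction, the bound $\ord_{|g|}(a)\le|g|$, and the Markov step — is routine.
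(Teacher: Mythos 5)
Your proposal is correct, and it genuinely differs from the paper's proof in its probabilistic input. The paper cites two results: Pavlov's theorem for the ensemble size $\vert T_n \vert \sim C(n-1)!\,n^{\varphi(a)/a}$, and a central limit theorem of Yakymiv (Lemma \ref{normal}) for $\log M(\tau_n)$ on $T_n$; the upper tail of the CLT, combined with the partial-summation asymptotic $\sum_{i \in S(n)} (\log i)/i \sim \frac{\varphi(a)}{2a}\log^2 n$ (which you also prove) and the trivial bound $\ord_d(a) \le d$, gives the theorem. You replace Pavlov's theorem by singularity analysis of $F(x)=\prod_{e\mid \mathrm{rad}(a)}(1-x^e)^{-\mu(e)/e}$ --- essentially re-proving it --- and, more interestingly, you replace the CLT by the first-moment bound $\mathbb{E}[\log\vert\sigma\vert] \le \sum_m \mathbb{E}[C_m]\log m \le \frac{\varphi(a)}{2a}\log^2 n\,(1+o(1))$ plus Markov's inequality with $\varepsilon = 1/\log\log n$. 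Your key observation is sound and is a real simplification: a lower bound on $N(a,S_n)$ only needs a $1/\log\log n$ proportion of $T_n$ to have order below the threshold, since polynomial and $\log\log$ factors are absorbed into $\exp(o(\log^2 n))$, so no concentration is required. The trade-off is that the paper's CLT also delivers the complementary estimate $N(a,S_n)=o_a((n-1)!\,n^{\varphi(a)/a})$ stated after the theorem, which a first moment alone cannot, and the paper's proof is shorter given the cited results. Two details to repair in a write-up: (i) the singularities of $F$ at roots of unity $\zeta \ne 1$ are algebraic, not logarithmic --- at a primitive $e_0$-th root of unity with $e_0 \mid \mathrm{rad}(a)$, $e_0>1$, the local exponent is $-\mu(e_0)\varphi(a)/(a\varphi(e_0))$, and for $e_0=2$ this has the same modulus $\varphi(a)/a$ as at $x=1$ but opposite sign (a zero rather than a blow-up), so its contribution to $[x^n]F$ is $O(n^{-\varphi(a)/a-1})$ and $x=1$ still dominates; the conclusion stands, but the justification must be this exponent computation; (ii) in the tail range $n/\log n < m \le n$ you need two-sided bounds $[x^k]F \asymp k^{\varphi(a)/a-1}$ valid for all $k \ge 1$, not merely asymptotically, to control $[x^{n-m}]F/[x^n]F$ when $n-m$ is bounded --- this follows from positivity of the coefficients together with the asymptotic, after which your $O(\log n \log\log n)$ bound for the tail is correct. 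Alternatively, you could simply quote Pavlov's theorem for the ensemble size, keeping only the Markov step as your contribution, which would make the argument both short and elementary in its probability.
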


\noindent Let $T_n = T_n(a)$ denote the set of permutations in $S_n$ with order coprime to $a$, and let $S(n)$ denote the set of positive integers coprime to $a$ that are at most $n$. We will use concentration bounds to show that almost all elements of $T_n$ have large order, and then we use the trivial bound that $\ord_d(a) \le d$ to bound $N(a, S_n)$. 

\begin{theorem}[{\cite[Theorem 1]{Pavlov1998}}] There exists constants $C = C(a)$ and $\delta = \delta(a)$ such that 
$$\vert T_n \vert = C(n-1)! n^{\varphi(a)/a} + O((n-1)! n^{\varphi(a)/a - \delta}).$$
\end{theorem}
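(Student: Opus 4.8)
The plan is to compute the exponential generating function of $|T_n|$ in closed form and apply singularity analysis. First I would observe that since the order of a permutation is the least common multiple of its cycle lengths, a permutation lies in $T_n$ precisely when every cycle length is coprime to $a$. By the exponential formula for the cycle decomposition of permutations, the exponential generating function is
\[
F(x) := \sum_{n \ge 0} \frac{|T_n|}{n!}\, x^n = \exp\left( \sum_{\substack{k \ge 1 \\ \gcd(k,a)=1}} \frac{x^k}{k} \right),
\]
since a single cycle on $k$ labelled points contributes $(k-1)!\, x^k/k! = x^k/k$, and a permutation is a set of disjoint cycles.

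Next I would put $F$ in closed form. Writing the coprimality condition through the M\"obius function, $1_{\{\gcd(k,a)=1\}} = \sum_{d \mid \gcd(k,a)} \mu(d)$, and using $\sum_{m \ge 1} x^{dm}/(dm) = -\tfrac1d \log(1-x^d)$, one obtains
\[
\sum_{\gcd(k,a)=1} \frac{x^k}{k} = -\sum_{d \mid a} \frac{\mu(d)}{d} \log(1 - x^d), \qquad \text{so} \qquad F(x) = \prod_{d \mid a} (1 - x^d)^{-\mu(d)/d}.
\]
Factoring $1 - x^d = (1-x)(1 + x + \cdots + x^{d-1})$ and using $\sum_{d\mid a}\mu(d)/d = \varphi(a)/a$ shows that near $x = 1$,
\[
F(x) = (1-x)^{-\varphi(a)/a}\, g(x), \qquad g(x) = \prod_{\substack{d \mid a \\ d > 1}}\Big(\tfrac{1-x^d}{1-x}\Big)^{-\mu(d)/d},
\]
where $g$ is analytic and nonzero at $x=1$ with $g(1) = \prod_{d\mid a,\, d>1} d^{-\mu(d)/d}$. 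Thus $x=1$ is an algebraic singularity of $F$ of exponent $\varphi(a)/a$. Since $F$ is a finite product of factors $(1-x^d)^{-\mu(d)/d}$, it is analytic in a slit neighbourhood of $x=1$ (a $\Delta$-domain), so the Flajolet--Odlyzko transfer theorem yields $[x^n]F \sim \frac{g(1)}{\Gamma(\varphi(a)/a)}\, n^{\varphi(a)/a - 1}$; multiplying by $n!$ and using $n!\, n^{\varphi(a)/a - 1} = (n-1)!\, n^{\varphi(a)/a}$ produces the main term with $C = g(1)/\Gamma(\varphi(a)/a)$.

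The main obstacle is the error term, which forces one to control all singularities of $F$ on $|x| = 1$, not just $x = 1$, since $(1-x^d)^{-\mu(d)/d}$ is also singular at every $d$-th root of unity. At a primitive $d_0$-th root of unity $\zeta$ (necessarily $d_0 \mid a$ squarefree) only the factors with $d_0 \mid d$ are singular, and a short computation gives local exponent $\alpha(\zeta) = \tfrac{\mu(d_0)}{d_0}\prod_{p\mid a,\, p \nmid d_0}(1 - \tfrac1p)$, so that $\alpha(\zeta)\big/(\varphi(a)/a) = \mu(d_0)\big/\prod_{p\mid d_0}(p-1)$, whose absolute value is $1/\prod_{p\mid d_0}(p-1) \le 1$, with the boundary case $d_0 = 2$ giving the ratio $-1$. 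Hence every competing singularity has $\alpha(\zeta) < \varphi(a)/a$ strictly, and a multiple-singularity version of the transfer theorem bounds their total contribution by $O\big(n^{\varphi(a)/a - 1 - \delta_0}\big)$ with $\delta_0 = \varphi(a)/a - \max_{\zeta \ne 1}\alpha(\zeta) > 0$; folding in the subleading terms of the expansion at $x=1$ then gives the claimed error $O\big((n-1)!\, n^{\varphi(a)/a - \delta}\big)$ for a suitable $\delta = \delta(a) > 0$. If one prefers to avoid the multi-singularity machinery, an alternative is to subtract the explicit singular parts at each root of unity and estimate the analytic remainder directly.
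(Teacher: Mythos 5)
Your proposal is correct, but note that the paper does not prove this statement at all: it is quoted verbatim as an external result of Pavlov, used as a black box in the proof of Theorem \ref{S_n}. Your argument is therefore a genuinely self-contained alternative, and it checks out. The identification of $T_n$ with permutations all of whose cycle lengths are coprime to $a$ is right, the exponential formula gives the stated product $F(x)=\prod_{d\mid a}(1-x^d)^{-\mu(d)/d}$, and the local analysis is sound: near $x=1$ the exponent is $\sum_{d\mid a}\mu(d)/d=\varphi(a)/a$, while at a primitive $d_0$-th root of unity the exponent is $\frac{\mu(d_0)}{d_0}\prod_{p\mid a,\,p\nmid d_0}\bigl(1-\tfrac1p\bigr)$, and your observation that the only case where the ratio to $\varphi(a)/a$ has absolute value $1$ is $d_0=2$, where the sign is negative (a zero-type singularity, contributing only $O(n^{-\varphi(a)/a-1})$ to the coefficients), correctly rules out any competing singularity of the same strength; for all other $d_0$ the ratio is at most $1/2$ in absolute value, so the multi-singularity transfer theorem of Flajolet--Odlyzko applies on a $\Delta$-domain slit at the finitely many roots of unity dividing $a$, and the error exponent $\delta=\delta(a)>0$ comes out as you describe (capped by the $O(n^{\varphi(a)/a-2})$ subleading term at $x=1$). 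A pleasant byproduct of your route, which the citation-only treatment in the paper does not give, is the explicit constant
\begin{equation*}
C \;=\; \frac{1}{\Gamma(\varphi(a)/a)}\prod_{\substack{d\mid a \\ d>1}} d^{-\mu(d)/d},
\end{equation*}
which one can sanity-check at $a=2$ against the classical asymptotic $\vert T_n\vert\sim\sqrt{2/\pi}\,n!\,n^{-1/2}$ for permutations of odd order.
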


\begin{lemma}[{\cite[Theorem 1]{Yakymiv2018}}]\label{normal} For some permutation $\sigma$, let $M(\sigma)$ denote the order of the permutation. Choose a random permutation $\tau_n$ from $T_n$. Then \begin{equation*}
P \left ( \frac{ \log M(\tau_n) - \sum_{i \in S(n)} (\log i)/i}{\sqrt{\sum_{i \in S(n)} (\log i)^2/i} } \le x \right) \xrightarrow{d} \Phi(x),
\end{equation*}
where $\Phi(x)$ is the standard normal distribution and $\xrightarrow{d}$ denotes convergence in distribution. 
\end{lemma}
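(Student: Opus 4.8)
The statement is an Erd\H{o}s--Tur\'an-type central limit theorem for $\log(\text{order})$ of a random permutation, restricted to the ensemble $T_n$ of permutations whose cycle lengths all lie in $S(n)$ (note that $\gcd(M(\sigma),a)=1$ exactly when every cycle length of $\sigma$ is coprime to $a$). The plan is in two stages: first replace $\log M(\tau_n)$ by a linear statistic of the cycle-length counts, and then prove a central limit theorem for that statistic using the fact that $T_n$ is a logarithmic combinatorial structure whose admissible cycle lengths are exactly $S(n)$.

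Write $Z_i = Z_i(\sigma)$ for the number of $i$-cycles of $\sigma$, so that $Z_i = 0$ unless $i \in S(n)$ when $\sigma \in T_n$, and set $L(\sigma) = \sum_{i \in S(n)} (\log i)\, Z_i(\sigma)$. Since $M(\sigma)$ is the least common multiple of the cycle lengths while $\exp(L(\sigma))$ is their product with multiplicity, we have $0 \le L(\sigma) - \log M(\sigma) =: R(\sigma)$, the redundancy recording, for each prime $p$, the gap between $\sum_\ell v_p(\ell)$ and $\max_\ell v_p(\ell)$ over the cycle lengths $\ell$. The first step is the Erd\H{o}s--Tur\'an redundancy estimate, adapted to $T_n$: bounding $E[R]$ by summing over primes $p$ the expected excess multiplicity of cycle lengths divisible by powers of $p$, one finds $E[R] = O(\log n \log\log n)$. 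Because $\sum_{i \in S(n)} (\log i)^2/i \sim \frac{\varphi(a)}{3a}\log^3 n$, so that the target standard deviation is of order $\log^{3/2} n$, Markov's inequality shows $R$ is negligible on the scale of the fluctuations, and it suffices to establish the central limit theorem for $L$.

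For the second stage I would invoke the conditioning relation of Arratia--Barbour--Tavar\'e (equivalently the Feller coupling): in $T_n$ the vector $(Z_i)_{i \in S(n)}$ has the law of independent Poisson variables $Z_i^*$ with $E[Z_i^*] = 1/i$, conditioned on $\sum_i i Z_i^* = n$. The count $\vert T_n \vert \sim C (n-1)!\, n^{\varphi(a)/a}$ from the theorem of Pavlov \cite{Pavlov1998} is precisely the normalization of such a structure of parameter $\theta = \varphi(a)/a$, where $\theta = \lim \frac{1}{\log n}\sum_{i \in S(n)} 1/i$. For the independent surrogate, $\sum_{i \in S(n)} (\log i)\, Z_i^*$ has mean $\sum_{i \in S(n)} (\log i)/i$ and variance $\sum_{i \in S(n)} (\log i)^2/i$ -- exactly the centering and scaling in the statement -- and since each summand is small relative to the variance and the weights $\log i$ vary slowly, a Lyapunov check gives asymptotic normality for the surrogate. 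It then remains to transfer the limit across the conditioning $\sum_i i Z_i = n$; I would do this by comparing $(Z_i)_{i \le b}$ with $(Z_i^*)_{i \le b}$ in total variation for a cutoff $b = o(n)$, handling the range $i > b$ separately.

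The main obstacle is exactly this large-cycle range. The terms with $i$ of order $n$ carry a constant fraction of the variance, since $\int (\log t)^2/t\, dt$ is dominated near its upper limit, yet there the counts $Z_i$ are neither independent nor Poisson: the large cycles of a random element of $T_n$ are governed by a Poisson--Dirichlet law of parameter $\theta = \varphi(a)/a$. The delicate point, as in the original Erd\H{o}s--Tur\'an argument, is to show that $\sum_{i > b} (\log i)\, Z_i$ is itself asymptotically Gaussian and combines with the small-cycle part into a single Gaussian with the stated variance. What makes this tractable is that $\log i = \log n + O(\log(n/i))$ is essentially constant across the large-cycle range, so that contribution reduces to $\log n$ times the number of large cycles, and the number of cycles of a random permutation in $T_n$ is itself asymptotically normal. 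Verifying that both the redundancy bound of the first stage and this large-cycle analysis survive intact under the restriction to cycle lengths in $S(n)$ -- with the density $\varphi(a)/a$ entering only through the mean and the variance -- is where the argument requires the most care.
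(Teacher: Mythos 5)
The paper never proves this lemma: its ``proof'' is a one-line citation to Yakymiv's work on random $A$-permutations, which contains exactly this Erd\H{o}s--Tur\'an-type CLT for permutations whose cycle lengths are restricted to a set of positive logarithmic density. So you are attempting a from-scratch argument, and your two-stage architecture --- reduce $\log M(\tau_n)$ to the linear statistic $\sum_{i}(\log i)Z_i$ via a redundancy bound, then prove a CLT for that statistic through the conditioning relation --- is indeed the standard (and essentially Yakymiv's) route. The surrogate computation, the Lyapunov check, and the identification $\theta=\varphi(a)/a$ are all fine.

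However, your handling of the variance-critical range contains a genuine error. You claim that ``terms with $i$ of order $n$ carry a constant fraction of the variance'' and propose to treat the range beyond the cutoff by replacing $\log i$ with $\log n$, reducing that contribution to $\log n$ times the number of large cycles. Both halves are wrong. Cycles with $i\asymp n$ (linear scale) carry a vanishing fraction of $\sum_{i\in S(n)}(\log i)^2/i \sim \tfrac{\varphi(a)}{3a}\log^3 n$; the range carrying a constant fraction is the polynomial scale $i\ge n^{\alpha}$. And on a range $(b,n]$ with $b=n^{\alpha}$, the replacement error $\sum_{i>b}\log(n/i)\,Z_i$ has mean $\asymp\log^2 n$ and standard deviation $\asymp\log^{3/2}n$ --- both at or above the CLT scale --- so this reduction destroys the theorem rather than proving it. The correct move is to choose the cutoff so that both constraints are met simultaneously: take, say, $b=n/\log n$. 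Then $b=o(n)$, so the total-variation comparison between $(Z_i)_{i\le b}$ and the independent Poissons applies to all of $[1,b]$; since $\log b=(1+o(1))\log n$, that range already carries $(1+o(1))$ of the variance; and the tail $(b,n]$ contributes at most $\log n\cdot\#\{\text{cycles of length}>b\}$, whose expectation is $O(\log n\log\log n)=o(\log^{3/2}n)$ and is disposed of by Markov's inequality. With this cutoff your Poisson--Dirichlet worry disappears entirely. Finally, note that your two key estimates --- the redundancy bound $E[R]=O(\log n\log\log n)$ for $T_n$ and the total-variation approximation for $A$-permutations --- are asserted, not proved; each requires Poisson-type control on the cycle counts of the conditioned measure (this is where Pavlov's asymptotic for $\vert T_n\vert$ enters), which is precisely the content of the machinery the paper cites in lieu of a proof.
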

\noindent We use lemma \ref{normal} to bound the order of most elements of $T_n$ and then use the trivial upper bound on $\ord_d(a)$. First we obtain an asymptotic for $\sum_{i \in S(n)} (\log i)^2/i$. 

\begin{lemma} We have
$$\sum_{i \in S(n)} \frac{\log i}{i} = \frac{\varphi(a)}{2a} \log^2 n + o(\log^2 n).$$
\end{lemma}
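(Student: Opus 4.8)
The plan is to evaluate the sum by removing the coprimality constraint with Möbius inversion over the divisors of $a$, and then reducing to the classical partial sums of $1/j$ and $(\log j)/j$. Since $\gcd(i,a)=1$ is detected by $\sum_{d \mid \gcd(i,a)} \mu(d)$, I would first write
$$\sum_{i \in S(n)} \frac{\log i}{i} = \sum_{d \mid a} \mu(d) \sum_{\substack{i \le n \\ d \mid i}} \frac{\log i}{i},$$
where only squarefree $d \mid a$ contribute. Because $a$ is fixed, the outer sum has a bounded number of terms, each with $d = O(1)$; this is what will let all error terms collapse at the end.

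For each fixed $d$, I would substitute $i = dj$ and split $\log(dj) = \log d + \log j$, reducing the inner sum to
$$\frac{\log d}{d} \sum_{j \le n/d} \frac{1}{j} + \frac{1}{d} \sum_{j \le n/d} \frac{\log j}{j}.$$
Here I would invoke the standard asymptotics $\sum_{j \le m} 1/j = \log m + O(1)$ and $\sum_{j \le m} (\log j)/j = \tfrac{1}{2} \log^2 m + O(1)$, the latter obtained by comparing the sum with $\int_1^m (\log t)/t\,dt = \tfrac{1}{2} \log^2 m$ via Abel summation. Since $d$ is bounded, $\log(n/d) = \log n + O(1)$ and hence $\log^2(n/d) = \log^2 n + O(\log n)$, so the inner sum equals $\tfrac{1}{2d}\log^2 n + O(\log n)$.

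Summing over $d$ and using $\sum_{d \mid a} \mu(d)/d = \prod_{p \mid a}(1 - 1/p) = \varphi(a)/a$, I would obtain
$$\sum_{i \in S(n)} \frac{\log i}{i} = \frac{\varphi(a)}{2a} \log^2 n + O(\log n),$$
and $O(\log n) = o(\log^2 n)$ yields the claim. There is no serious obstacle here: the only point to watch is the bookkeeping of the error terms, namely verifying that for fixed $a$ each of the $O(1)$ divisors $d$ contributes an error of at most $O(\log n)$, so that the total error is comfortably absorbed into $o(\log^2 n)$. An equivalent route that avoids Möbius inversion would be to apply Abel summation directly against the counting function of integers up to $t$ coprime to $a$, whose leading density $\varphi(a)/a$ produces the same main term.
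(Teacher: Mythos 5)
Your proof is correct, but it takes a different route from the paper. You remove the coprimality condition via M\"obius inversion over the divisors of $a$, writing $1_{\gcd(i,a)=1} = \sum_{d \mid \gcd(i,a)} \mu(d)$, and then reduce each inner sum to the classical asymptotics for $\sum_{j \le m} 1/j$ and $\sum_{j \le m} (\log j)/j$, finishing with the identity $\sum_{d \mid a} \mu(d)/d = \varphi(a)/a$. The paper instead never inverts: it applies partial (Abel) summation directly against the indicator of $S(n)$, first establishing $\sum_{i \le n} 1_{S(n)}(i)/i = \frac{\varphi(a)}{a}\log n + O(1)$ from the density of integers coprime to $a$, and then evaluating the resulting weighted sum $\sum_{m=1}^{n-1}(\log m - \log(m+1))\log m$ by comparing $\sum_{i\le n} (\log i)/i$ with $\int_1^n (\log x)/x\,dx = \frac{1}{2}\log^2 n$ --- this is essentially the alternative you sketch in your last sentence. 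Your version buys a sharper error term, $O(\log n)$ instead of the paper's $o(\log^2 n)$, and the bookkeeping is arguably cleaner since each of the boundedly many divisors $d \mid a$ contributes a main term $\frac{\mu(d)}{2d}\log^2 n$ plus an $O(\log n)$ error; the paper's version stays entirely within elementary density estimates and avoids introducing the M\"obius function. Both are complete proofs of the lemma as stated.
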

\begin{proof}
Observe that, using partial summation,
\begin{equation*} \begin{split}
\sum_{i \in S(n)} \frac{\log i}{i} & = \log n \sum_{i=1}^{n} \frac{1_{S(n)}(i)}{i} + \sum_{m=1}^{n-1} (\log m - \log(m + 1)) \sum_{i=1}^m \frac{1_{S(n)}(i)}{i}.
\end{split} \end{equation*}
and \begin{equation*} \begin{split}
\sum_{i=1}^{n} \frac{1_{S(n)}(i)}{i} &= \sum_{m=1}^{n-1} \left ( \frac{1}{m} - \frac{1}{m+1} \right) m \frac{\varphi(a)}{a} + O(1) \\
& = \frac{\varphi(a)}{a} \log n + O(1).
\end{split} \end{equation*}
Using partial summation again, we have that $$\sum_{i=1}^n \frac{\log i}{i} = \log^2 n + \sum_{m=1}^{n-1} (\log m - \log m + 1)\log m + O(\log n).$$
On the other hand, $$\sum_{i=1}^n \frac{\log i}{i} = \int_{1}^{n} \frac{\log x}{x} dx + o(\log n) = \frac{\log^2 n}{2} + o(\log n).$$
Hence $$\sum_{m=1}^{n-1} (\log m - \log m + 1)\log m = \frac{\log^2 n}{2} + o(\log n),$$ showing that $$\sum_{i \in S(n)} \frac{\log i}{i} = \frac{\varphi(a)}{2a} \log^2 n + o(\log^2 n).$$
\end{proof}

\begin{proof}[Proof of theorem \ref{S_n}]
We have the trivial bound $$\sum_{i \in S(n)} \frac{(\log i)^2}{i} = O(\log^3 n).$$
For all but $o_a(\vert T_n \vert)$ permutations $\tau_n$ in $T_n$, we have that $$\log M(\tau_n) \le \sum_{i \in S(n)} \frac{\log i}{i} + O(\log \log n (\log n)^{3/2}).$$
Hence, for almost all permutations in $T_n$, we have that \begin{equation*} \label{orderbound}M(\tau_n) \le \exp \left ( \frac{\varphi(a)}{2a} \log^2 n (1 + o(1)) \right). \end{equation*}
In order to turn this into a lower bound for $N(a, S_n)$, we need a lower bound on $\ord_d(a)$ for $d$ coprime to $a$. Using the trivial bound that $\ord_d(a) \le d \le \exp \left ( \frac{\varphi(a)}{2a} \log^2 n (1 + o(1)) \right)$ for almost all permutations $\tau_n \in T_n$, we have that $$N(a, S_n) \gg_a \frac{ (n-1)! n^{\varphi(a)/a}}{\exp \left ( \frac{\varphi(a)}{2a} \log^2 n (1 + o(1)) \right)} = \frac{ n! }{\exp \left ( \frac{\varphi(a)}{2a} \log^2 n (1 + o(1)) \right)}.$$ \end{proof}

\noindent We conjecture that this lower bound is of the correct order, as the trivial bound $\ord_d(a) \le d$ is usually fairly sharp for most $d$. Without finer control over the orders of permutations than is known, it seems difficult to prove a sharp upper bound. However, we can show that

$$N(a, S_n) = o_a((n-1)! n^{\varphi(a)/a}).$$
Indeed, by lemma \ref{normal}, we have that for all but $o_a(\vert T_n \vert)$ elements of $T_n$, 
$$M(\tau_n) \ge \exp \left ( \frac{\varphi(a)}{2a} \log^2 n (1 + o(1)) \right).$$
Hence $$N(a, S_n) \le o_a(\vert T_n \vert) + \frac{ (n-1)! n^{\varphi(a)/a}}{\exp \left ( \frac{\varphi(a)}{2a} \log^2 n (1 + o(1)) \right)} = o_a((n-1)! n^{\varphi(a)/a}).$$

\subsection{Special linear groups over finite fields}
\noindent Because of highly explicit knowledge of the conjugacy class structure of $SL_2(\mathbb{F}_q)$, we are able to compute $N(a, SL_2(\mathbb{F}_q))$.
\begin{theorem}\label{SL(2,q)} Let $q = p^{c}$ be an odd prime power. If $\gcd(a, q) = 1$, then
$$N(a, SL_2(\mathbb{F}_q)) = \frac{q^2 - q}{2} N(a, C_{q+1}) + \frac{q^2 + q}{2} N(a, C_{q-1}) + (q^2 - 1)(1 + 1_{2 \nmid a})(\frac{1}{\ord_p(a)} - 1).$$
where $1_{2 \nmid a}$ is $1$ if $a$ is odd and $0$ otherwise. If $\gcd(a, q) > 1$, then the last term does not appear. 
\end{theorem}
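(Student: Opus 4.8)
The plan is to use Lemma~\ref{formula} in its grouped-by-order form, $N(a,SL_2(\mathbb{F}_q)) = \sum_{g} 1/\ord_{|g|}(a)$ where the sum runs over $g$ with $\gcd(|g|,a)=1$, and to organize the sum according to the well-known partition of $SL_2(\mathbb{F}_q)$ into its distinct types of elements. For odd $q$ with $\gcd(a,q)=1$, the standard conjugacy class data gives: the identity and the central element $-I$; the two unipotent-type families and the two families of elements that are $-1$ times a unipotent (these have order divisible by $p$, or by $2p$); the split semisimple elements, which are diagonalizable over $\mathbb{F}_q$ with eigenvalues $\{\mu,\mu^{-1}\}$, $\mu \in \mathbb{F}_q^*$; and the nonsplit semisimple elements, diagonalizable over $\mathbb{F}_{q^2}$ with eigenvalues a conjugate pair of norm~$1$, i.e.\ lying in the cyclic subgroup of order $q+1$. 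First I would record the sizes of these pieces and, crucially, the cyclic subgroup each type generates.

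Next I would compute each block's contribution. The split semisimple elements, together with $\pm I$, are exactly the elements lying in conjugates of a fixed split torus $T_s \cong C_{q-1}$; each such torus is self-centralizing, there are $q(q+1)/2$ of them, and they pairwise intersect in the center $\{\pm I\}$. So the total contribution of $\pm I$ and the split semisimple elements to $\sum_g 1/\ord_{|g|}(a)$ equals $\frac{q^2+q}{2}\bigl(N(a,C_{q-1}) - 1\bigr) + 1$ by counting each nontrivial split torus as a copy of $C_{q-1}$ sharing the identity (the $-1$ corrects for reusing the identity term, though I must be careful that the center contributes according to its actual order). Symmetrically, the nonsplit semisimple elements live in the $q(q-1)/2$ conjugates of the nonsplit torus $C_{q+1}$, giving $\frac{q^2-q}{2}\bigl(N(a,C_{q+1})-1\bigr)$. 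The unipotent block contributes $\frac{q^2-1}{\ord_p(a)}$ since each nontrivial unipotent class has elements of order $p$ (the count of such elements is $q^2-1$), and when $a$ is odd the $-I$-times-unipotent elements have order $2p$ and contribute $\frac{q^2-1}{\ord_{2p}(a)}$; when $a$ is even these orders share a factor with $a$ and drop out, which is why the last two terms vanish for $\gcd(a,q)>1$ and the $2p$-term vanishes when $a$ is even.

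The main step is then bookkeeping: I would add the four block contributions and reconcile the constant and center terms so that the overcounting of $\pm I$ across all the tori, and the placement of $-I$ (which has order $2$), collapses into the single correction $-(q^2-1)(1+1_{2\nmid a})$. Concretely, expanding $\frac{q^2+q}{2}(N(a,C_{q-1})-1) + \frac{q^2-q}{2}(N(a,C_{q+1})-1)$ produces $\frac{q^2+q}{2}N(a,C_{q-1}) + \frac{q^2-q}{2}N(a,C_{q+1}) - (q^2)$, and combining this with the unipotent/center adjustments should yield the stated $-(q^2-1)(1+1_{2\nmid a})$ after accounting for how $-I$ is handled in $N(a,C_{q\pm 1})$ versus as a standalone order-$2$ element.

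The hard part will be this reconciliation of constants: keeping precise track of how many times the identity, and separately the central involution $-I$, are counted when I replace each torus by a full copy of $N(a,C_{q\pm1})$, and ensuring the order-$2$ element $-I$ is assigned its order-$2$ cycle contribution exactly once rather than once per torus. I expect the cleanest route is to \emph{not} use the inclusion--exclusion Lemma~\ref{sum} (which only gives an inequality) but instead to compute $\sum_g 1/\ord_{|g|}(a)$ directly element-by-element, using the exact cardinalities of each conjugacy-class type and the fact that $w_{SL_2(\mathbb{F}_q)}(d)$ for each relevant $d$ is determined by counting eigenvalue pairs of the appropriate order in $\mathbb{F}_q^*$ or in the norm-one subgroup of $\mathbb{F}_{q^2}^*$. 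This reduces $N(a,SL_2(\mathbb{F}_q))$ to sums of $w_{C_{q-1}}(d)/\ord_d(a)$ and $w_{C_{q+1}}(d)/\ord_d(a)$, which are exactly $N(a,C_{q-1})$ and $N(a,C_{q+1})$ up to the center corrections, making the final identity a matter of careful assembly.
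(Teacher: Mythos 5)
Your proposal is correct and follows essentially the same route as the paper: both decompose $SL_2(\mathbb{F}_q)$ into central, unipotent-type (orders $p$ and $2p$), split semisimple, and nonsplit semisimple elements, count elements of each order via eigenvalues in $C_{q-1}$ or the norm-one subgroup $C_{q+1}$, and apply Lemma~\ref{formula} with exactly the center corrections $1 + 1_{2\nmid a}$ you identify. Your torus-conjugate framing is just another view of the same conjugacy-class bookkeeping, and your flagged subtlety (that the tori share $-I$, not just the identity, so the correction per torus is $1+1_{2\nmid a}$) is precisely how the paper's constants $-(q^2-1)(1+1_{2\nmid a})$ arise.
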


\noindent Before we begin the proof, we recall some facts about conjugacy classes in $SL_2(\mathbb{F}_q)$ for $q$ odd. We break the conjugacy classes into $4$ types (see e.g. \cite{FH}):
\begin{itemize}
\item Type $1$: The $(q-3)/2$  conjugacy classes of elements which are diagonalizable of $\mathbb{F}_q$; they are parametrized by matrices of the form $\begin{pmatrix} \alpha & 0 \\ 0 & \alpha^{-1} \end{pmatrix}$ for $\alpha \in \mathbb{F}_q^* \setminus \{1, -1 \}$. Each conjugacy class has $q(q+1)$ elements.
\item Type $2$: The $(q-1)/2$ conjugacy classes of elements which are diagonalizable of $\mathbb{F}_{q^2}$ but not $\mathbb{F}_q$; they are parametrized by matrices of the form $\begin{pmatrix} \alpha & 0 \\ 0 & \alpha^{-1} \end{pmatrix}$ for $\alpha \in \mathbb{F}_{q^2}^* \setminus \{1, -1 \}$ and satisfying $\alpha \cdot Fr(\alpha) = \alpha^{q+1} = 1$, where $Fr$ denotes the Frobenius endomorphism. Each conjugacy class has $q(q-1)$ elements.
\item Type $3$: The central conjugacy classes $\{I\}$ and $\{-I\}$.
\item Type $4$: The $4$ conjugacy classes that are not semi-simple, which are parametrized by $\begin{pmatrix} 1 & 1\\ 0 & 1 \end{pmatrix},$ $\begin{pmatrix} -1 & 1\\ 0 & -1 \end{pmatrix},$ $\begin{pmatrix} 1 & b\\ 0 & 1 \end{pmatrix},$ $\begin{pmatrix} -1 & b\\ 0 & -1 \end{pmatrix}$, where $b$ is a non-square in $\mathbb{F}_q$. There are $(q^2 -1)/2$ elements in each conjugacy class.
\end{itemize}

\begin{proof} Note that the order of an element in a type $1$ conjugacy classes is just the order of the eigenvalue, and the eigenvalues, and the eigenvalues like in the cyclic group $\mathbb{F}_q^{*}$. Therefore, because each type $1$ conjugacy class has eigenvalues $\alpha$ and $\alpha^{-1}$, there are $\varphi(d)/2$ type $1$ conjugacy classes of order $d$ for each divisor $d$ of $q - 1$, $d \not= 1, 2$. Hence 
$$\sum_{a \in \mathcal{C},\text{ } \mathcal{C} \text{ type }1} \frac{1}{\ord_d(a)} = \frac{q^2 + q}{2} (N(a, C_{q - 1}) - 1 - 1_{2 \nmid a}).$$

\noindent Similarly for type $2$, we note that the elements of $\mathbb{F}_{q^2}$ satisfying $x^{q + 1} = 1$ form a cyclic subgroup of the multiplicative group. Therefore
$$\sum_{a \in \mathcal{C},\text{ } \mathcal{C} \text{ type }2} \frac{1}{\ord_d(a)} = \frac{q^2 - q}{2} (N(a, C_{q - 1}) - 1 - 1_{2 \nmid a}).$$

\noindent For type $3$, the contribution is $1 + 1_{2 \nmid a}$.

\noindent For type $4$, each element with eigenvalues $1$ has order $p$, and each element with eigenvalues $-1$ has order $2p$. Hence,
$$\sum_{a \in \mathcal{C},\text{ } \mathcal{C} \text{ type }4} \frac{1}{\ord_d(a)} = \frac{q^2 - 1}{ \ord_p(a)} + 1_{2 \nmid a} \frac{q^2 - 1}{\ord_{2p}(a)} = \frac{q^2 - 1}{\ord_p(a)}(1 + 1_{2\nmid a}),$$ since $\ord_{2p}(a) = \ord_p(a)$ for $a$ odd.
Summing over the $4$ types of conjugacy classes gives the result. 
\end{proof}

\noindent Then theorem \ref{pom} allows us to bound the normal and extremal order of $N(a, SL_2(\mathbb{F}_p))$, using that $N(a, SL_2(\mathbb{F}_p)) \gg p^2 N(a, \mathbb{F}_p^*)$.

\begin{corollary} \label{SL2Fp} 
There exists infinitely many primes $p$ such that $$N(a, SL_2(\mathbb{F}_p)) \ge p^{29/12 + o(1)}.$$
We also have 
$$\frac{1}{\pi(x)}\sum_{p \le x} N(a, SL_2(\mathbb{F}_p)) \gg x^{2.293}.$$
\end{corollary}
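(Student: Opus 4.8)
The plan is to reduce everything to the prime-field case via the explicit formula for $N(a, SL_2(\mathbb{F}_p))$ and then feed in the two relevant bullets of Theorem~\ref{pom}. For all but the finitely many primes $p$ dividing $a$ we have $\gcd(a,p)=1$, so we may use the simplified formula in the remark following Theorem~\ref{SL(2,q)}; discarding these finitely many primes affects neither an ``infinitely many $p$'' statement nor an average. Since $N(a, \mathbb{F}_p^*) = N(a, C_{p-1})$ and all of $N(a,C_{p+1})$, $N(a,C_{p-1})$, $N(a,C_p)$ are at least $1$, dropping the nonnegative terms gives the clean lower bound
$$N(a, SL_2(\mathbb{F}_p)) \ge \frac{p^2+p}{2}\,N(a, \mathbb{F}_p^*) - (p^2-p)(1 + 1_{2\nmid a}) \ge \frac{p^2+p}{2}\,N(a, \mathbb{F}_p^*) - 2p^2,$$
in which the only term carrying the size of $N(a,\mathbb{F}_p^*)$ is the first, and the correction is $O(p^2)$.

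For the extremal bound I would take the infinitely many primes $p$ furnished by the first bullet of Theorem~\ref{pom}, for which $N(a,\mathbb{F}_p^*) > p^{5/12 + o(1)}$. Plugging this into the displayed inequality gives $N(a, SL_2(\mathbb{F}_p)) \ge \tfrac12 p^{2 + 5/12 + o(1)} - 2p^2 = p^{29/12 + o(1)}$, the subtracted $O(p^2)$ being negligible because $29/12 > 2$ so that $2p^2 = o(p^{29/12})$. This half is essentially a one-line substitution.

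For the average, summing the inequality over $p \le x$ and dividing by $\pi(x)$ reduces the claim to showing $\frac{1}{\pi(x)}\sum_{p\le x} p^2\, N(a, \mathbb{F}_p^*) \gg x^{2.293}$, since the contribution of the $O(p^2)$ correction is $\frac{1}{\pi(x)}\sum_{p\le x} 2p^2 = O(x^2) = o(x^{2.293})$. The third bullet of Theorem~\ref{pom} supplies $\sum_{p\le x} N(a,\mathbb{F}_p^*) \gg \pi(x)\,x^{0.293}$, and to convert this into the $p^2$-weighted sum I would restrict to the dyadic range $p \in (x/2, x]$, where $p^2 \ge x^2/4$, obtaining
$$\sum_{p\le x} p^2\, N(a,\mathbb{F}_p^*) \ge \frac{x^2}{4}\sum_{x/2 < p \le x} N(a,\mathbb{F}_p^*).$$
It then suffices to know that the lower bound of the third bullet already holds for this dyadic window, i.e. $\sum_{x/2<p\le x} N(a,\mathbb{F}_p^*) \gg \pi(x)\, x^{0.293}$, after which dividing by $\pi(x)$ yields the desired $x^{2.293}$.

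The main obstacle is exactly this last localization. A lower bound on $\sum_{p\le x} N(a,\mathbb{F}_p^*)$ valid for every $x$ does not by itself lower-bound the increment $\sum_{p\le x} N - \sum_{p \le x/2} N$, and naive partial summation against the trivial bound $N(a,\mathbb{F}_p^*) < p$ loses, since the subtracted integral $\int_2^x 2t\big(\sum_{p\le t} N\big)\,dt$ is of the same order $O(x^4/\log x)$ as the main term $x^2\sum_{p\le x}N$, so the two could cancel. One therefore has to open up the proof of the third bullet of Theorem~\ref{pom}: as in the construction used for Theorem~\ref{normc_n} of this paper, where the contributing moduli are forced into the window $[x^{1+o(1)}, x]$ near the top of the range, the primes producing the large values of $N(a,\mathbb{F}_p^*)$ are built to lie just below $x$, so their contribution is concentrated in $(x/2,x]$ and the $p^2$ weight supplies the extra factor $x^2$ cleanly. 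Verifying that the Pomerance--Shparlinski construction localizes in this way is the one point requiring care; the rest is bookkeeping.
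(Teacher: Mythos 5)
Your overall route is the same one the paper intends: the paper states Corollary~\ref{SL2Fp} with no proof at all, as a direct substitution of Theorem~\ref{pom} into Theorem~\ref{SL(2,q)}, and your reduction $N(a,SL_2(\mathbb{F}_p)) \ge \tfrac{p^2+p}{2}N(a,\mathbb{F}_p^*) - O(p^2)$ followed by the first bullet of Theorem~\ref{pom} is exactly that; the extremal half of your argument is complete and correct. The real content of your write-up is the observation that the average half is \emph{not} a black-box consequence of the third bullet, and this concern is genuine. Knowing only that $N_p := N(a,\mathbb{F}_p^*) < p$ and that $\sum_{p\le x} N_p \gg \pi(x)x^{0.293}$ for every $x$, one cannot deduce $\sum_{p\le x} p^2 N_p \gg \pi(x)x^{2.293}$: for instance, taking $N_p = p$ for $p \le y$ and $N_p = 1$ for $y < p \le x_0$, with $y \asymp x_0^{0.647}$ chosen so that the unweighted lower bound holds at every scale up to $x_0$, gives $\frac{1}{\pi(x_0)}\sum_{p\le x_0} p^2 N_p \asymp x_0^{2}$, far below $x_0^{2.293}$; repeating this at sparse scales produces values of $N_p$ consistent with all three bullets of Theorem~\ref{pom} for which the weighted conclusion fails. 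So the localization you call for is indispensable, not bookkeeping, and your proposed repair is the correct one: the Pomerance--Shparlinski lower bound is proved by producing primes $p \equiv 1 \pmod{m}$ for moduli $m$ with $\ord_m(a) = x^{o(1)}$, counted by distribution-of-primes-in-arithmetic-progressions estimates that apply equally well on the window $(x/2,x]$, so their construction does give $\sum_{x/2 < p \le x} N_p \gg \pi(x) x^{0.293}$, and then $p^2 \ge x^2/4$ on every contributing prime finishes the argument. In short, your proof is correct modulo this verification against \cite{Pomerance2017}, which you flag honestly; the paper itself silently skips the issue, so your treatment is more careful than the original.
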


\section{On the minimal size of $N(a, G)$ among groups of a fixed order}
\noindent We now prove theorem \ref{nilpotent}. 
Our strategy is to show that the sum $\sum_{{g \in G, \\ \gcd(\vert g \vert, a) = 1}} \frac{1}{\ord_{\vert g \vert}(a)}$ majorizes $\sum_{{g \in C_{\vert G \vert}, \\ \gcd(\vert g \vert, a) = 1}} \frac{1}{\ord_{\vert g \vert}(a)}$ for any nilpotent group $G$. Then, lemma \ref{formula} immediately implies theorem \ref{nilpotent}.
Before proving theorem \ref{nilpotent}, we prove a lemma.
For a group $G$, let $B_G(n)$ denote the number of elements of order at least $n$ in $G$.
\begin{lemma}\label{p_ineq} Let $G$ be a group of order $p^k$. Then for all $n$,
$B_G(n) \le B_{C_{p^k}}(n)$.
\end{lemma}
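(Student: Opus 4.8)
The plan is to reduce the claimed inequality to a lower bound on the number of solutions of $x^{p^j} = e$ in $G$, and then invoke a standard structural fact about $p$-groups.

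First I would exploit that, since $G$ is a $p$-group, the order of every element is a power of $p$. Consequently $B_G(n)$ is constant on each interval $p^j < n \le p^{j+1}$, where it equals the number of elements of order at least $p^{j+1}$; and the same is true of $B_{C_{p^k}}(n)$, since $C_{p^k}$ likewise has only elements of $p$-power order. Thus it suffices to verify the inequality at the breakpoints $n = p^{j+1}$, i.e. to show that for every $0 \le j \le k$ the number of elements of $G$ of order at most $p^j$ is at least the corresponding count in $C_{p^k}$. Since $C_{p^k}$ has a unique cyclic subgroup of order $p^j$, consisting precisely of the solutions of $x^{p^j} = e$, that count is exactly $p^j$. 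Passing to the complement, the lemma reduces to the single family of inequalities
\[
\vert \{ g \in G : g^{p^j} = e \} \vert \ge p^j \quad \text{for all } 0 \le j \le k.
\]

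The key step is then this lower bound on the number of $p^j$-torsion elements, and here I would use the elementary fact that a group of order $p^k$ possesses a subgroup $H$ of order $p^j$ for every $0 \le j \le k$ (a standard consequence of the fact that a finite $p$-group has a subgroup of every order dividing its order). By Lagrange's theorem every element of $H$ has order dividing $p^j$ and hence satisfies $x^{p^j} = e$, so the solution set contains at least the $p^j$ elements of $H$, which is exactly the required inequality. As an alternative I could cite Frobenius's theorem, which guarantees that the number of solutions of $x^{p^j} = e$ is divisible by $p^j$ and hence, being positive, is at least $p^j$.

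There is no substantial obstacle here: the argument is essentially a bookkeeping reduction followed by the invocation of a well-known group-theoretic fact. The only point that requires care is the translation between the ``order at least $n$'' formulation of $B_G$ and the ``solutions of $x^{p^j}=e$'' formulation, together with the observation that in $C_{p^k}$ these torsion counts are attained with exact equality --- which is precisely the feature that makes the cyclic group extremal. Once the reduction to $\vert \{ g : g^{p^j} = e \} \vert \ge p^j$ is in place, the subgroup argument finishes the proof immediately.
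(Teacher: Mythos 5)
Your proof is correct, but it takes a genuinely different route from the paper's. You observe that in a $p$-group both $B_G$ and $B_{C_{p^k}}$ are step functions jumping only at powers of $p$, reduce the lemma to the torsion-count inequality $\vert \{ g \in G : g^{p^j} = e \} \vert \ge p^j$ for $0 \le j \le k$, and then obtain that bound from the existence of a subgroup of order $p^j$ in any group of order $p^k$ (or, alternatively, from Frobenius's theorem on the number of solutions of $x^n = e$). The paper instead argues by contradiction: it uses the fact that the number of elements of order $p^b$ in any finite group is a multiple of $\varphi(p^b)$, so if $G$ had more elements of order at least $p^\ell$ than $C_{p^k}$ does, then $G$ would have fewer elements of order below $p^\ell$ than the cyclic group and hence \emph{no} elements of some order $p^b$ with $b < \ell$; this contradicts the fact that the powers of an element of order at least $p^\ell$ realize every smaller $p$-power order. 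Your version is the more direct one: it avoids the contradiction and the divisibility bookkeeping, it makes the extremality of the cyclic group transparent (the torsion bound is attained with equality exactly for $C_{p^k}$), and the Frobenius variant is a one-line finish. What the paper's argument buys is self-containedness at the level of element orders alone --- it needs neither the subgroup-existence theorem for $p$-groups nor Frobenius's theorem, only the elementary count of generators of cyclic subgroups --- which keeps the prerequisites minimal. Both arguments are sound, and either suffices for the application to Theorem \ref{nilpotent}.
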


\begin{proof}
First observe that the number of elements of order $n$ in any finite group is a multiple of $\varphi(n)$. Suppose $G$ is a counterexample to the lemma, then choose $\ell$ such that $B_G(p^\ell) > B_{C_{p^{k}}}(p^{\ell})$. Since $B_G(1) = B_{C_{p^k}}(1)$, there must be fewer than $\varphi(p^b)$ elements of order $p^b$ for some $b < \ell$. Hence there are no elements of order $p^b$ for some $b$. But if a group has an element of order $p^c$, then it also has an element of order $p^b$ for every $b < c$.
\end{proof}

\begin{proof}[Proof of theorem \ref{nilpotent}]
We first prove theorem \ref{nilpotent} for $p$-groups. Note that $\ord_{p^b}(a) \le \ord_{p^c}(a)$ if $b \le c$. But then lemma \ref{p_ineq} and lemma \ref{formula} immediately imply that $N(a, G) \ge N(a, C_{\vert G \vert})$ for any $p$-group $G$. 

\noindent Recall that a group is nilpotent if and only if it is a direct product of $p$-groups. Let $G = P_1 \times \dotsb \times P_k$ be a nilpotent group, and let $P_1, \dotsc, P_k$ be $p$-groups with orders $p_i^{e_i}$ for distinct primes $p_1, \dotsc, p_k$. Let $n = \vert G \vert$. We may assume that $\gcd(a, n) = 1$, as otherwise we may eliminate the $p$-groups with order not coprime to $a$. We need to show that $$\sum_{d \mid n} \frac{w(d)}{\ord_d(a)} \ge \sum_{d \mid n} \frac{\varphi(d)}{\ord_d(a)}. $$ 
Observe that, for a nilpotent group $G$, $w_G$ is a multiplicative function, i.e. $$w_G(p_1^{j_1}p_2^{j_2} \dotsb p_k^{j_k}) = w_{G}(p_1^{j_1}) w_{G}(p_2^{j_2}) \dotsb w_{G}(p_k^{j_k}).$$ 

\noindent We claim that for any set of $\ell$ primes, $p_{i_1}, \dotsc, p_{i_\ell}$, we have that $$\sum_{\substack{d \mid n \\ d = p_{i_1}^{b_1} \dotsb p_{i_\ell}^{b_\ell}}} \frac{w(d)}{\ord_d(a)} \ge \sum_{\substack{d \mid n \\ d = p_{i_1}^{b_1} \dotsb p_{i_\ell}^{b_\ell}}} \frac{\varphi(d)}{\ord_d(a)}.$$

\noindent This would clearly imply the result by summing over all subsets of the primes dividing $n$. We prove the claim by induction on $\ell$. The base case is the case of $p$-groups. Fix $b_1, \dotsc, b_{\ell-1}$ such that $p_{i_1}^{b_1} \dotsb p_{i_{\ell-1}}^{b_\ell-1} \mid n$. Then \begin{equation*} \begin{split}
\sum_{k=0}^{j_{i_\ell}} \frac{w(p_{i_1}^{b_1} \dotsb p_{i_{\ell-1}}^{b_\ell-1} p_{i_\ell}^k)}{\ord_{p_{i_1}^{b_1} \dotsb p_{i_{\ell-1}}^{b_\ell-1} p_{i_\ell}^k}(a)} & = w(p_{i_1}^{b_1} \dotsb p_{i_{\ell-1}}^{b_\ell-1}) \sum_{k=0}^{j_{i_{\ell}}} \frac{w(p_{i_{\ell}}^k)}{\ord_{p_{i_1}^{b_1} \dotsb p_{i_{\ell-1}}^{b_\ell-1} p_{i_\ell}^k}(a)} \\
& \ge \frac{w(p_{i_1}^{b_1} \dotsb p_{i_{\ell-1}}^{b_\ell-1})}{\ord_{p_{i_1}^{b_1} \dotsb p_{i_{\ell-1}}^{b_\ell-1}}(a)} \sum_{k=0}^{j_{i_{\ell}}} \frac{w(p_{i_{\ell}}^k)}{\ord_{p_{i_{\ell}}^k}(a)},
\end{split}\end{equation*} 
where we use lemma \ref{ord} in the inequality. The result follow from summing  over all choices of $b_1, \dotsc, b_{\ell-1}$ and the inductive hypothesis. 
\end{proof}

\section{Discussion}

\noindent In addition to proving a better upper bound on $N(a, S_n)$ and proving conjecture \ref{minconjecture}, we pose several open problems. 

\noindent Since the map $x \mapsto x^a$ is eventually periodic, the orbit $x, x^a, x^{a^2}, \dotsc$ consists of a tail which does not repeat followed by a cycle. If $x$ has no tail, then we say that $x$ is \textit{purely periodic}. Thus in $G(a, H)$, every purely periodic element has a rooted tree of tails leading into it. In \cite[Theorem 1]{Chou2004}, Chou and Shparlinski showed that if $H$ is cyclic, then all of the tails coming off the purely periodic elements in $H$ are isomorphic. In particular, every purely periodic element has tails of the same size. This enabled Chou and Shparlinski to give a simple expression for the average length of the period over all elements of $C_n$. Let $C(a, G)$ denote the average period of an element in $G$. Then 
\begin{theorem}[{\cite[Theorem 1]{Chou2004}}] If $\rho$ is the largest divisor of $n$ coprime to $a$, then
$$ C(a, C_n) = \frac{1}{\rho} \sum_{d \mid \rho} \varphi(d) \ord_d(a).$$
\end{theorem}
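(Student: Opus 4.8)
The plan is to reduce the computation of $C(a, C_n)$ to a weighted sum over the divisors of $n$ and then regroup the terms according to the part of the order coprime to $a$.

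First I would pin down the period of a single element $x \in C_n$. Writing $d = \vert x \vert$, the successive orders along the orbit $x, x^a, x^{a^2}, \dotsc$ are $d/\gcd(d, a^t)$; these are non-increasing and stabilize at the largest divisor $d^{\ast}$ of $d$ coprime to $a$. Once an iterate has order $d^{\ast}$ it is coprime to $a$, hence purely periodic, and by the $t = 0$ case of lemma \ref{formula} it lies on a cycle of length $\ord_{d^{\ast}}(a)$. Thus the period of $x$ equals $\ord_{d^{\ast}}(a)$ and depends only on $d^{\ast}$. Note that the cited fact that all tails are isomorphic is not actually needed here: the only input is that the period is a function of the order alone.

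Since $w_{C_n}(d) = \varphi(d)$, averaging over all $n$ elements gives
$$C(a, C_n) = \frac{1}{n} \sum_{d \mid n} \varphi(d)\, \ord_{d^{\ast}}(a),$$
where $d^{\ast}$ denotes the largest divisor of $d$ coprime to $a$. I would then factor $n = \rho \sigma$, where $\sigma = n/\rho$ has all of its prime factors dividing $a$ and $\gcd(\rho, \sigma) = 1$. Each divisor $d \mid n$ factors uniquely as $d = e m$ with $e \mid \rho$ and $m \mid \sigma$, so that $d^{\ast} = e$ and $\varphi(d) = \varphi(e)\varphi(m)$. Grouping the sum by the value $e = d^{\ast}$ yields
$$\sum_{d \mid n} \varphi(d)\, \ord_{d^{\ast}}(a) = \left( \sum_{e \mid \rho} \varphi(e)\, \ord_e(a) \right) \left( \sum_{m \mid \sigma} \varphi(m) \right) = \sigma \sum_{e \mid \rho} \varphi(e)\, \ord_e(a),$$
using the identity $\sum_{m \mid \sigma} \varphi(m) = \sigma$. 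Dividing by $n = \rho \sigma$ produces the claimed formula.

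The main obstacle is the first step: correctly identifying the period of each element as $\ord_{d^{\ast}}(a)$, which rests on the orbit analysis showing the order stabilizes at the coprime-to-$a$ part of $\vert x \vert$ and then invoking lemma \ref{formula}. Everything afterward is the routine unique factorization $d = em$, the multiplicativity of $\varphi$, and the standard $\varphi$-summation, none of which should present difficulty.
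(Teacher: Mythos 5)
Your proof is correct, and it is worth noting that the paper itself contains no proof of this statement: it is quoted from Chou and Shparlinski, and the surrounding discussion attributes their derivation to a structural fact, namely that all trees of tails hanging off the purely periodic points of $C_n$ are isomorphic, so the average period can be computed by weighting each cycle length by the common tree size $n/\rho$. Your route is genuinely different and more self-contained: you compute the period of each individual element directly --- the order of $x^{a^t}$ is $d/\gcd(d,a^t)$, which stabilizes at the coprime-to-$a$ part $d^{\ast}$ of $d = \vert x \vert$, so by the $t=0$ case of lemma \ref{formula} the period of $x$ is $\ord_{d^{\ast}}(a)$ --- and then regroup the divisor sum $\frac{1}{n}\sum_{d \mid n} \varphi(d)\,\ord_{d^{\ast}}(a)$ via the unique factorization $d = em$ with $e \mid \rho$, $m \mid n/\rho$, using multiplicativity of $\varphi$ and $\sum_{m \mid \sigma}\varphi(m) = \sigma$ to get the normalization $\sigma/n = 1/\rho$. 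This bypasses the tail-isomorphism theorem entirely, as you observe; what your argument does not recover is the extra structural information (the isomorphism type of the trees), which the Chou--Shparlinski approach yields as a byproduct and which the paper actually cares about in this discussion section. As a verification of the displayed formula, however, your argument is complete and every step checks out.
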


\noindent For general groups, the tails coming off a purely periodic vertex are not the same size. It would be interesting to compute or bound $C(a, G)$ for various families of groups.  

\noindent By analogy with the power graph, it would be interesting to determine what set of invariants is determined by $G(a, H)$ for some fixed $a$ or for all $a$. Groups $H$ of prime exponent and the same order clearly have the same $G(a, H)$ for every $a$. Using the example of Cameron and Ghosh in \cite{Cameron2011}, we see that, if $H = \langle x, y\quad  \vert \quad x^3 = y^3 = [x,y]^3= 1 \rangle$, the smallest non-abelian group of exponent $3$, then  $G(a, C_3 \times C_3 \times C_3) \cong G(a, H)$ for every $a$. This raises the following question:
\begin{question}
Are there groups $H$ and $K$ such that the power graph of $H$ is isomorphic to the power graph of $K$, but $G(a, H)$ is not isomorphic to $G(a, K)$ for some $a$?
\end{question}
\noindent It would be interesting to compute the asymptotics of $N(a, SL_n(\mathbb{F}_q))$ as $n$ grows, in analogy with the symmetric group. As in the case of $N(a, S_n)$, lemma \ref{sum} implies that the sequence $\{N(a, SL_n(\mathbb{F}_q))\}_{n \in \mathbb{N}}$ is non-decreasing since $SL_{n-1}(\mathbb{F}_q)$ embeds into $SL_{n}(\mathbb{F}_{q})$. 

\noindent One could also allow $a$ to vary. Let $\text{exp(G)}$ denote the exponent of $G$. Then clearly $N(a, G) = N(a + \text{exp(G)}, G)$. Then the following question is natural:
\begin{question}
What $a \in \{2, 3, \dotsc, \exp(G) - 1\}$ maximizes $N(a, S_n)$?
\end{question}

\section{Acknowledgments}
\noindent This research was conducted at the University of Minnesota Duluth REU and was supported by NSF DMS grant 1659047. We thank Igor Shparlinski for pointing out an oversight in an earlier version of the paper, Joe Gallian for suggesting the subject of the paper,  Arsen Yakymiv for discussing his work with us, and Joe Gallian and Phil Matchett Wood for reading the manuscript. We are also grateful to the referee for their helpful comments. 

\bibliography{power_map}
\bibliographystyle{plain}

\end{document}